\title[A general approximation approach  for the simultaneous treatment ...]{A general approximation approach  for the simultaneous treatment of integral and discrete operators}
\author[G. Vinti]{Gianluca Vinti}      
\address{Department of Mathematics and Computer Sciences - University of Perugia, 
         1, Via Vanvitelli\\ 06123 Perugia -Italy\\
Orcid Id: 0000-0002-9875-2790}
\email{gianluca.vinti@unipg.it}      
\author[L. Zampogni]{Luca Zampogni}
\address{Department of Mathematics and Computer Sciences - University of Perugia,   
         1, Via Vanvitelli\\ 06123 Perugia - Italy\\
Orcid Id: 0000-0002-6047-2646}
\email{luca.zampogni@unipg.it}       
\date{}
 \newtheorem{thm}{Theorem}[section]
 \newtheorem{cor}[thm]{Corollary}
 \newtheorem{prop}[thm]{Proposition}
\def\XXint#1#2#3{{\setbox0=\hbox{$#1{#2#3}{\int}$ }
\vcenter{\hbox{$#2#3$ }}\kern-.6\wd0}}
\DeclareMathOperator{\supp}{Supp}
\DeclareMathOperator{\sinc}{sinc}
 \def\R{\mathbb R}
\def\Z{\mathbb Z}
\def\T2{{\mathbb T}^2}
\def\T2{{\mathbb T}^2}
\def\T3{{\mathbb T}^3}
\newcommand{\ma}{>}
\newcommand{\ap}{``}
\newcommand{\mi}{<}
\newcommand{\s}{\hspace{4ex}}
\newcommand{\rest}{\upharpoonleft}
\newcommand{\ep}{\varepsilon}
\numberwithin{equation}{section}
\begin{document}
 \subjclass[2010]{41A35, 46E30, 47A58}  
\keywords{Approximation by operators, sampling Kantorovich operators, convolution operators,  Durrmeyer generalized sampling series, Orlicz spaces, modular convergence, locally compact Hausdorff topological groups}       
\begin{abstract}
 In this paper we give a unitary approach for the simultaneous study of the convergence of discrete  and integral operators described by  means of a  family of linear continuous functionals acting on functions defined on locally compact Hausdorff topological groups. The general family of  operators introduced and studied  includes very well-known operators in the literature. We give results of   uniform convergence, and modular convergence  in the general setting of Orlicz spaces.The latter result allow us to cover many other settings as the $L^p$-spaces, the interpolation spaces, the exponential spaces and many others.
%
\end{abstract}
\maketitle    
\section{Introduction}
The aim of  this paper is to give a unitary approach for the simultaneous study of discrete and integral operators. In order to do this, we need to introduce a general setting dealing with   a family of linear continuous functionals acting on suitable functional spaces, consisting on functions defined in locally compact Hausdorff topological groups. More precisely, we introduce the following linear integral operators:
  \begin{equation}\tag{I}\label{int1}T_wf(z)=\int_H\chi_w(z-h_w(t))\cdot L_{h_w(t)}fd\mu_H(t),\end{equation} where $H$ and $G$ are locally compact Hausdorff topological groups provided with their regular Haar measures $\mu_H$ and $\mu_G$ respectively, $f\in M(G)$ (the space of measurable functions on $G$) is such that the above integral defining $T_w$ is well defined, $h_w:H\rightarrow h_w(H)\subset G$ are homeomorphisms,    $(L_{h_w(t)})_{t\in H}$ is a suitable family of linear operators $L_{h_w(t)}:M(G)\rightarrow\mathbb R,$ and  $(\chi_w)_{w>0}$ is a family of kernel functionals $\chi_w:G\rightarrow\mathbb R.$

For the above family \eqref{int1} we establish a uniform approximation result for the aliasing error $T_wf-f$ (Theorem 4.2) and also a modular convergence result in the general setting of Orlicz spaces (Theorem 4.7). As pointed out in the paper, this latter result implies convergence in $L^p$-spaces and in other particular Orlicz spaces, as for example interpolation spaces, exponential spaces and others.
In order to obtain the general result of Theorem 4.7 we first test the modular convergence for continuous functions with compact support (Theorem 4.3), we establish a modular continuity of the family \eqref{int1} (Proposition 4.5), and, using a density result (Proposition 4.6), we are able to prove the main result.

It is worthwhile noting  that the operators \eqref{int1} contain various families of discrete and integral operators, very well-known in the literature. Among them, putting $G= \R, H=\Z$ and $h_w(k)=t_k/w$, where $(t_k)_{k}$ is a suitable sequence of real numbers,   there are the generalized sampling series \cite{Bu,BSpS,BuSt2,BMV,BBSV3}
$$T_wf(x)=\sum_{k\in\mathbb Z}\chi_w(x-t_k/w)f\left(\dfrac{t_k}{w}\right),$$ where 
$L_{t_k/w}f=f\left(\dfrac{t_k}{w}\right),$
 the Kantorovich sampling series \cite{BBSV2,VZ1,CV1,VZ2,CV2,VZ3}
$$T_wf(x)=\sum_{k\in\mathbb Z}\chi_w(x-t_k/w)\dfrac{w}{t_{k+1}-t_k}\int_{t_k/w}^{t_{k+1}/w}f(z)dz,$$
where $$L_{t_k/w}f=\dfrac{w}{t_{k+1}-t_k}\int_{t_k/w}^{t_{k+1}/w}f(z)dz,$$
and the  Durrmeyer generalized sampling series \cite{BM3}
$$T_wf(x)=\sum_{k\in\mathbb Z}\chi_w(x-t_k/w)\cdot w\int_\mathbb R\psi(wu-t_k)f(u)du,\s x\in\mathbb R,$$ 
where $$L_{t_k/w}f:=w\int_\mathbb R \psi(wu-t_k)f(u)du,$$ and $\psi$ is a suitable kernel function. 

Note that also perturbated versions of the above operators can be produced, as shown in Section 3.

Moreover, different  choices of the topological groups, the corresponding Haar measures and  the families $h_w$ and $L_{h_w(t)}$ produce classical convolution operators (see \cite{BN}) and the Mellin convolution operators (see \cite{AV4,AV5}), as shown in Section 5, together with their multidimensional versions.  Since each of the above operators plays an important role in the approximation theory and in the sampling and signal theory,  it seems very useful to have at disposal a unifying approach. 
Note that  the study of the generalized sampling operators has been started by P. L. Butzer and his school at Aachen (see\cite{Bu,BSpS,BuH,BuSt2,BL1,Sts,BL2,BSS}),
while the Kantorovich sampling series has been  introduced and studied in e.g., \cite{BBSV2,VZ1,VZ2}. Both are   important   from a mathematical point of view and  also in signal reconstruction: indeed, the  generalized sampling operators  represent extensions of the classical sampling theorem, and  their Kantorovich version has an important meaning in the applications in signal processing, since, replacing the value $f\left(\dfrac{t_k}{w}\right)$ with the average of $f$ in a small interval close to $\dfrac{t_k}{w}$ (i.e., with the value $\displaystyle{w\int_{\frac{t_k}{w}}^{\frac{t_{k+1}}{w}}f(u)du}),$
they reduce the \ap time-jitter'' errors.  For related references on the above operators, including connections with the classical sampling, see e.g.,  \cite{Whit,Sha,Je,DoSi,Hi,Hi1,BD,HiSt,Kot,BMV} and   \cite{Kan,G,BK,CV,V2,CV1,CV2,VV2,CCMV1,CCMV2,CV3,CV4,CV5}.
.

In Section 5, we will show that, for the above particular cases of operators, the assumptions of our approach are all satisfied.

 In Section 6, we will present some graphical representations showing the approximation of the considered operators to the function $f$.

Finally, we point out that the case of  Durrmeyer generalized sampling series  is important since it represents a case of operators for which continuous functions with compact support are approximated modularly and not with respect to the norm convergence in the Orlicz space (i.e., for which Corollary \ref{c1} cannot be established and assumption (L3) becomes not trivial). 


\section{Preliminaries}

We review some basic concepts and notations which will be used throughout all the paper.
Let $(H,+)$ be a locally compact Hausdroff topological group, with $\theta_H$ as neutral element. It is well-known (see \cite{Ma,PON}) that there exists a unique (up to a multiplicative constant) left (resp. right) translation invariant regular Haar measure $\mu_H$ (resp. $\nu_H$). In general, if $A$ is a Borel set of $H$, and if $-A=\{-a\;|\;a\in A\}$, then $\mu_H(-A)=k\nu_H(A),$ for some nonzero constant $k\in\mathbb R$. The measures $\mu_H$ and $\nu_H$ coincide if and only if $A$ in an \it unimodular group. \rm In this case one has $\mu_H(A)=\nu_H(A)=\mu_H(-A)$ for every Borel set $A\subset H$. Compact groups, abelian groups and discrete groups are well-known examples of unimodular groups.

Now, let $(G,+)$ be a locally compact abelian topological group. Let $\mathcal B$ be a local base of its neutral element $\theta_G$. Let $M(G)$ denote the set of measurable bounded functions $f:M\rightarrow \mathbb R$. By $C(G)$ (resp. $C_c(G)$) we denote the subsets of $M(G)$ consisting on uniformly continuous and bounded (resp. continuous with compact support) functions $f:G\rightarrow \mathbb R$. It is understood that $C(G)$ and $C_c(G)$ are equipped with the standard $||\cdot||_\infty$-norm.

Let $\varphi:\mathbb R_0^+\rightarrow\mathbb R_0^+$ be a continuous function: it is called a $\varphi-$function if:
\begin{enumerate}\item $\varphi(0)=0$ and $\varphi(x)>0$ for all $x>0$;
\item $\varphi$ is non-decreasing on $\mathbb R_0^+$;
\item $\displaystyle{\lim_{x\rightarrow+\infty}\varphi(x)=+\infty}$.\end{enumerate}
If a $\varphi-$function $\varphi$ is chosen, 
the functional $$I_G^\varphi:M(G)\rightarrow[0,+\infty]:f\mapsto\int_G\varphi(|f(z)|)d\mu_G(z)$$
is called \it modular functional \rm on $M(G)$. 

The Orlicz space $L^\varphi(G)$ is the subset of $M(G)$ consisting of those functions $f\in M(G)$ such that $$I_G^\varphi(\lambda f)<+\infty$$ for some $\lambda>0,$ equipped with the (strong) norm $$||f||_\varphi:=\inf\{\lambda>0\;|\;I_G^\varphi(\lambda f)<+\infty\}.$$ The norm $||\cdot||_\varphi$ is called \it Luxemburg norm. \rm  Anyway, the most natural notion of convergence in $L^\varphi(G)$, which is weaker than the norm convergence, is the \it modular convergence\rm: a sequence $(f_n)\subset L^\varphi(\mathbb R)$ converges modularly to a function $f\in L^\varphi(\mathbb R)$ if there exists a number $\lambda>0$ such that $$\lim_{n\rightarrow+\infty}I^\varphi_G[\lambda(f_n-f)]=0.$$ It is not difficult to see that for a sequence to converge strongly in $L^\varphi(G)$ it is necessary and sufficient that the above limit is true \it for every \rm $\lambda>0$. However, in some important cases the modular convergence and the Luxemburg norm convergence on $L^\varphi(G)$ are equivalent: this happens if and only if the so-called $\Delta_2-$condition is fulfilled, namely if there exists a number $M>0$ such that $$\dfrac{\varphi(2x)}{\varphi(x)}\leq M,\;\;\;\mbox{for every}\;x>0.$$ It can be shown that the $\Delta_2$-condition is satisfied if and only if  $f\in E^\varphi(G)= \{f\in L^\varphi(G)\;|\;I_G^\varphi(\lambda f)<+\infty$ for every $\lambda>0\}$. For references on Orlicz spaces, see e.g., \cite{KR,M,RR1,RR2,BMV}.

Orlicz spaces arise as a natural generalization of $L^p$ spaces, and in fact if $1\leq p<\infty$ and $\varphi(x)=x^p$, then the Orlicz space generated by $I_G^\varphi$ is exactly the Lebesgue space $L^p(G)$. The function $\varphi(x)=x^p$ satisfies the $\Delta_2$-condition, hence modular convergence is equivalent to the strong Luxemburg one, and they are in turn equivalent to the standard $L^p$-norm.
 However, there are many other examples of Orlicz spaces, which play an important role in many different situations, like PDEs and functional analysis. We mention the so-called exponential spaces (\cite{H}), where the $\varphi-$function generating the Orlicz space is given by  $\varphi_\alpha(x)=\exp(x^\alpha)-1,$ for a certain number $\alpha>0$. The exponential space $L^{\varphi_\alpha}(G)$ is an example in which the modular convergence is not equivalent to the Luxemburg one, and indeed $\varphi_\alpha$ does not satisfy the $\Delta_2$-condition. Another interesting example is furnished by the so-called interpolation spaces or Zygmund spaces  (\cite{Stn,BR,EK}). The generating $\varphi-$function is given by $\varphi_{\alpha,\beta}(x)=x^\alpha\ln^\beta(e+x)$, for fixed $\alpha\geq 1$ and $\beta>0$.  The function $\varphi_{\alpha,\beta}$ satisfies the $\Delta_2$-condition, hence in $L^{\varphi_{\alpha,\beta}}(G)$ the modular and the Luxemburg convergence are equivalent.

\section{Basic assumptions and examples}

Let $H$ and $G$ be locally compact Haursdorff topological groups with regular Haar measures $\mu_H$ and $\mu_G$ respectively. We require that $G$ is abelian (but, a priori, this assumption is not required on $H$).  Assume further that for every $w>0$ there exists a map $h_w:H\rightarrow G$ which restricts to a homeomorphism from $H$ to $h_w(H)$. Along this Section and Section 4 we will make assumptions on both the family $(L_{h_w(t)})_{t\in H}$ and on the kernel functions $(\chi_w)_{w>0}$. In Section 5 we will show several examples of operators for which these assumptions are all satisfied.

For every $w>0$, let $(L_{h_w(t)})_{t\in H}$ be a family of linear operators $L_{h_w(t)}:M(G)\rightarrow\mathbb R$ such that 

\begin{itemize}
\item[\bf(L1)] \it when restricted to $L^\infty(G)$, the operators $L_{h_w(t)}$ are \bf uniformly bounded\it, i.e., if $\tilde L_{h_w(t)}:=L_{h_w(t)}\rest_{L^\infty(G)}:L^\infty(G)\rightarrow\mathbb R,$ then $$||\tilde L_{h_w(t)}||:=\sup_{||f||_\infty\leq 1}|L_{h_w(t)}f|\leq \Upsilon<\infty,$$ for some positive constant $\Upsilon$ and for every $w>0$ and $t\in H$.
\item [\bf(L2)] \it If $f\in C(G)$, then the family $(L_{h_w(t)})_{t\in H}$ \bf preserves the continuity \it of $f$ in the sense that for every $\ep>0$, there exist $\overline w(\ep)>0$ and a set $B_\ep\in\mathcal B$ such that if $z-h_w(t)\in B_\ep$ and $w>\overline w$, then $|L_{h_w(t)}f-f(z)|<\ep.$

\end{itemize}

Further, let $(\chi_w)_{w>0}\subset L^1(G)$ be a family of kernel functionals $\chi_w:G\rightarrow\mathbb R$, which is uniformly bounded in $L^1(G)$ (i.e., $||\chi_w||_1<\Gamma$, for some constant $\Gamma>0$ and for every $w>0$).

 Let us introduce some notation which will be useful from now on. For $w>0$ and $z\in G$, if $A\subset G$ is a measurable set, we define $$A_{w,z}:=\{t\in H\;|\;z-h_w(t)\in A\},$$
$$A_w=\{t\in H\;|\;h_w(t)\in A\},\;\;\;\;\Upsilon_w(A)=\mu_H(A_w).$$

 The family $(\chi_w)_w$ is assumed to be chosen in such a way that:
\begin{itemize}
\item[$(\chi_1)$] the map $t\mapsto\chi_w(z-h_w(t))\in L^1(H)$ for every $z\in G$ and $w>0$;
\item[\bf($\chi_2$)] for every $w>0$ and $z\in G$, $$\int_H\chi_w(z-h_w(t))d\mu_H(t)=1;$$
\item[\bf($\chi_3$)] for every $w>0$, $$M:=\sup_{z\in G}\int_H|\chi_w(z-h_w(t))|d\mu_H(t)\mi\infty;$$
\item[\bf($\chi_4$)] if $w>0$, $z\in G$ and $B\in\mathcal B$, then $$\lim_{w\rightarrow+\infty}\int_{H\setminus B_{w,z}}|\chi_w(z-h_w(t))|d\mu_H(t)=0,$$ uniformly with respect to $z\in G$;
\item[\bf($\chi_5$)] for every $\ep>0$ and for every compact set $K\subset G$, there exist a symmetric compact set $C\subset G$ such that, for every $t\in K_w,$ we have $$\int_{G\setminus C}\Upsilon_w(K)|\chi_w(z-h_w(t))|d\mu_G(z)<\ep,$$ for sufficiently large $w>0$.
\end{itemize}
\vspace{.5cm}
For $w>0$, we study the operator $T_w:M(G)\rightarrow R,$ defined by \begin{equation}\label{O}T_wf(z)=\int_H\chi_w(z-h_w(t))\cdot L_{h_w(t)}fd\mu_H(t),\end{equation} where $f\in M(G)$ is such that the above integral defining $T_w$ is well defined.

\bigskip

There are a lot of  known examples of operators which can be expressed in the form \eqref{O}. We describe some (but certainly not all) kind of operators which are included in the general setting introduced by \eqref{O}.

\bf(1) \rm\s If $G=\mathbb R$, $H=\mathbb Z$ with the counting measure $d\mu_H(t)$, \eqref{O} becomes a series, namely $$\sum_{k\in\mathbb Z}\chi_w(z-h_w(k))\cdot L_{h_w(k)}f.$$ This type of series has been introduced  in \cite{VZ3}. According to the form of the operators $L_{h_w(k)}$, we obtain various types of the so-called \it sampling series. \rm

If, for example,  $h_w(k)=t_k/w\in\mathbb R$, and $(t_k)_k$ is an increasing sequence of real numbers such that $$\lim_{k\rightarrow\pm\infty}t_k=\pm\infty,\s \delta<t_{k+1}-t_k<\Delta\;\;(\delta,\Delta>0),$$ we can define $L_{h_w(k)}:M(\mathbb R)\rightarrow\mathbb R$ by $$L_{h_w(k)}f=L_{t_k/w}f=f\left(\dfrac{t_k}{w}\right).$$ The operator $T_wf$ translates into the \bf generalized sampling series, \rm  namely $$T_w^{(1)}f(x)=\sum_{k\in\mathbb Z}\chi_w(x-t_k/w)f\left(\dfrac{t_k}{w}\right).$$

Instead, we can define $L_{t_k/w}f:M(\mathbb R)\rightarrow\mathbb R$ as $$L_{t_k/w}f=\dfrac{w}{t_{k+1}-t_k}\int_{t_k/w}^{t_{k+1}/w}f(z)dz.$$ In this case the operators \eqref{O} give rise to the \bf Kantorovich sampling series, \rm 
$$T_w^{(2)}f(x)=\sum_{k\in\mathbb Z}\chi_w(x-t_k/w)\dfrac{w}{t_{k+1}-t_k}\int_{t_k/w}^{t_{k+1}/w}f(z)dz.$$ 

In $T_w^{(2)}f$ the sampling values $f(t_k/w)$ are replaced by averages, with the effect of reducing the so-called time-jitter errors and of regularizing the series, allowing to study the convergence in a wider sense (see \cite{BBSV2,VZ1,CV6,CV7}). 

A more general class of sampling series is given by the so-called {\bf  Durrmeyer generalized sampling series}. 
Let $\psi\in L^1(\mathbb R)$ be such that $$\int_\mathbb R\psi(u)du=1,$$ and let
 $$L_{h_w(k)}f=L_{t_k/w}f:=w\int_\mathbb R \psi(wu-t_k)f(u)du.$$   We now have the series $$(T^{(3)}_wf)(x)=\sum_{k\in\mathbb Z}\chi_w(x-t_k/w)\cdot L_{t_k/w}f=$$ $$=\sum_{k\in\mathbb Z}\chi_w(x-t_k/w)\cdot w\int_\mathbb R\psi(wu-t_k)f(u)du,\s x\in\mathbb R.$$

Perturbations can be added to the family of operators $L_{h_w(k)}$ to emulate the effect of the so-called time-jitter errors: so, for instance we can take $L_{h_w(k)}=f(t_k/w+j_k(w))$ where $\displaystyle{\lim_{w\rightarrow+\infty}j_k(w)=0}$, uniformly with respect to $k\in\mathbb Z$, to obtain a ''time-jitter'' generalized sampling series; see e.g. \cite{BL2,VZ2}.  We can also produce series analogous to $T^{(2)}_wf(x)$ and $T^{(3)}_wf(x)$ by adding suitable time-jitter perturbations. 

\bf (2)  \rm\s If $H=G=\mathbb R$, then we retrieve some interesting integral operators, together with their Kantorovich versions. For instance, we can take $h_w(t)=t,$ and  $L_{h_w(t)}f=L_tf=f(t)$, and \eqref{O} translates into a convolution operator, namely
$$T_w^{(4)}f(x)=\int_{\mathbb R}\chi_w(x-t)f(t)dt.$$ However, other types of integral operators can be generated: for example, if we take $h_w(t)=t$ and $$L_{h_w(t)}=\dfrac{w}{2}\int_{t-1/w}^{t+1/w}f(u)du,$$  we obtain  $$T_w^{(5)}f(x)=\int_{\mathbb R}\chi_w(x-t)\left(\dfrac{w}{2}\int_{t-1/w}^{t+1/w}f(u)du\right)dt,$$ which is a Kantorovich version of a convolution operator. Note that, whenever $f\in L^1_{loc}(\mathbb R)$,  the factor $\dfrac{w}{2}\displaystyle{\int_{t-1/w}^{t+1/w}f(u)du}$ converges, as $w\rightarrow+\infty$, to $f(t)$ for a.a. $t\in\mathbb R$ (by the Lebesgue-Besicovich Theorem). This fact testifies that, asymptotically, the operator $T_w^{(5)}f$ can be compared to the standard convolution operator.

As another example, let us now consider $H=G=\mathbb R^+$. In this case, the group operation is the product, and the only regular Haar measure on $\mathbb R^+$ (up to multiplicative constants) is the logarithmic measure $d\mu(t)=\dfrac{dt}{t}.$ We can then set $h_w(t)=t$ and $L_{h_w(t)}f=L_tf=f(t)$. Under these assumptions, we are left with $$T_w^{(6)}f(x)=\int_0^{\infty}\chi_w\left(\dfrac{x}{t}\right)f(t)\dfrac{dt}{t},$$ which is the Mellin convolution operator. 

Moreover, we can modify the operators $L_{h_w(t)}f$ by taking $$L_{h_w(t)}f=\dfrac{1}{2\ln(1+1/w)}\int_{t\frac{w}{w+1}}^{t\frac{w+1}{w}}f(u)\dfrac{du}{u},$$ and obtain a Kantorivich version  of the Mellin convolution, namely $$T_w^{(7)}f(x)=\int_0^\infty\chi_w\left(\dfrac{x}{t}\right)\left(\dfrac{1}{2\ln(1+1/w)}\int_{t\frac{w}{w+1}}^{t\frac{w+1}{w}}f(u)\dfrac{du}{u}\right)\dfrac{dt}{t}.$$ Note that, for large values of $w>0$, the factor $\dfrac{1}{2\ln(1+1/w)}$ can be replaced simply by $w/2$.

Finally, the setting of locally compact topological groups allows us to cover also the multidimensional version of the above operators.

\section{Approximation results}

In this section we state and prove the results concerning the convergence of \eqref{O}. Besides Theorem 4.2., which proves the convergence of \eqref{O} to $f$ uniformly, for $f\in C(G)$,   Theorem \ref{tfin} proves the convergence of \eqref{O} to $f$ in a subspace $\mathcal Y$ of an Orlicz space $L^\varphi(G)$, where, from now on, $\varphi$ is a convex $\varphi$-function. We will achieve this result by density, and by a number of preliminary (but important) facts which need to be proved. In the course of the discussion, we will also state two more assumptions on the family of operators $(L_{h_w(t)})_{t\in H},$ which will in turn both restrict the admissible families  $(L_{h_w(t)})_{t\in H}$ and define the subspace $\mathcal Y$ in which the main convergence result can be proved.

\begin{prop}\label{p1} Let $f\in L^\infty(G)$. Then the operator $T_w$ is well defined for every $w>0$ and in fact $$|T_wf(z)|\leq M\Upsilon ||f||_\infty.$$ \end{prop}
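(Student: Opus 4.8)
The plan is to reduce everything to the two uniform-boundedness hypotheses (L1) and $(\chi_3)$, together with the integrability assumption $(\chi_1)$, so that the statement follows from a single dominated-integral estimate. Concretely, I must verify two things: that for each fixed $z\in G$ the integrand defining $T_wf(z)$ is $\mu_H$-integrable in $t$, so that $T_wf(z)$ makes sense pointwise, and that the resulting value is bounded by $M\Upsilon||f||_\infty$ uniformly in $z$.

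First I would exploit the linearity of each $L_{h_w(t)}$ to transfer (L1) from the unit ball to an arbitrary $f\in L^\infty(G)$. If $f=0$ the claim is trivial; otherwise $g:=f/||f||_\infty$ satisfies $||g||_\infty\leq 1$, so (L1) gives $|L_{h_w(t)}g|\leq\Upsilon$, and by homogeneity $|L_{h_w(t)}f|\leq\Upsilon||f||_\infty$ for every $t\in H$ and every $w>0$. This pointwise bound is the crucial link, since it controls the functional factor uniformly in $t$ by a constant independent of $t$, $w$ and $z$.

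With this in hand, the integrand is dominated as
$$|\chi_w(z-h_w(t))\cdot L_{h_w(t)}f|\leq\Upsilon||f||_\infty\,|\chi_w(z-h_w(t))|.$$
By $(\chi_1)$ the right-hand side is an $L^1(H)$ function of $t$ for each fixed $z$, so the integrand is absolutely integrable and $T_wf(z)$ is well defined. Integrating, pulling out the constant, and then invoking $(\chi_3)$ yields
$$|T_wf(z)|\leq\Upsilon||f||_\infty\int_H|\chi_w(z-h_w(t))|\,d\mu_H(t)\leq M\Upsilon||f||_\infty,$$
uniformly in $z\in G$, which is precisely the assertion.

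The one genuinely delicate point — and the only obstacle beyond routine estimation — is the measurability in $t$ of the map $t\mapsto L_{h_w(t)}f$, which is needed before the domination argument can certify integrability rather than mere pointwise boundedness of the integrand. I would address this by noting that it is already implicit in the standing requirement that the integral in \eqref{O} be meaningful, or, in the concrete realizations of Section 5, by checking it directly from the explicit form of $L_{h_w(t)}$ (a sampled value or an average, which depend measurably on $t$). Everything else is a direct consequence of combining (L1), $(\chi_1)$ and $(\chi_3)$.
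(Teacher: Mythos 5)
Your proposal is correct and follows essentially the same route as the paper's own proof, which is exactly the chain $|T_wf(z)|\leq\int_H|\chi_w(z-h_w(t))|\,|L_{h_w(t)}f|\,d\mu_H(t)\leq\Upsilon\|f\|_\infty\int_H|\chi_w(z-h_w(t))|\,d\mu_H(t)\leq M\Upsilon\|f\|_\infty$ via (L1) and $(\chi_3)$. You merely make explicit two points the paper leaves implicit --- the homogeneity step extending (L1) from the unit ball to general $f$, and the role of $(\chi_1)$ and measurability of $t\mapsto L_{h_w(t)}f$ in certifying well-definedness --- which is a reasonable elaboration, not a different argument.
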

\begin{proof}
Let us take $f\in L^\infty(G)$. 
Then \begin{equation*}\begin{split} &|T_wf(z)|\leq \int_H|\chi_x(z-h_w(t))|\cdot|L_{h_w(t)}f|d\mu_H(t)\leq\\&\leq \Upsilon||f||_\infty\int_H|\chi_w(z-h_w(t))|d\mu_H(t)\leq M\Upsilon ||f||_\infty.\end{split}\end{equation*} 
\end{proof}

Now we prove the first result of convergence.
\begin{thm}\label{t1} Let $f\in C(G)$. Then $T_wf$ converges uniformly to $f$, i.e.,    $$\lim_{w\rightarrow+\infty}||T_wf-f||_\infty=0.$$ 
\end{thm}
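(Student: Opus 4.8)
The plan is to exploit the normalization $(\chi_2)$ to rewrite the error $T_wf-f$ as a single integral against the kernel, and then to control that integral by separating the contribution coming from a small neighborhood of the ``diagonal'' (where (L2) makes $L_{h_w(t)}f$ close to $f(z)$) from the remaining tail (which $(\chi_4)$ forces to be uniformly small). Concretely, since $(\chi_2)$ gives $\int_H\chi_w(z-h_w(t))\,d\mu_H(t)=1$ for every $z\in G$ and $w>0$, I would multiply $f(z)$ by this integral and subtract, obtaining
$$T_wf(z)-f(z)=\int_H\chi_w(z-h_w(t))\bigl[L_{h_w(t)}f-f(z)\bigr]\,d\mu_H(t),$$
whence
$$|T_wf(z)-f(z)|\leq\int_H|\chi_w(z-h_w(t))|\cdot|L_{h_w(t)}f-f(z)|\,d\mu_H(t).$$
Note that $f\in C(G)\subset L^\infty(G)$, so the operator is well defined by Proposition \ref{p1} and all the quantities below are finite.

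Fix $\ep>0$. By (L2), applied to $f\in C(G)$, there exist $\overline w(\ep)>0$ and $B_\ep\in\mathcal B$ such that $|L_{h_w(t)}f-f(z)|<\ep$ whenever $z-h_w(t)\in B_\ep$ and $w>\overline w$; in the notation introduced earlier this is precisely the statement that the second factor of the integrand is $<\ep$ for $t\in (B_\ep)_{w,z}$. I would therefore split the integral over $H$ into the piece over $(B_\ep)_{w,z}$ and the piece over $H\setminus(B_\ep)_{w,z}$. On the first piece, bounding $|L_{h_w(t)}f-f(z)|$ by $\ep$ and invoking $(\chi_3)$ gives a contribution at most $\ep\int_H|\chi_w(z-h_w(t))|\,d\mu_H(t)\leq\ep M$, uniformly in $z$.

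For the tail piece I would use only crude bounds on the second factor: since $f\in L^\infty(G)$, (L1) gives $|L_{h_w(t)}f|\leq\Upsilon\|f\|_\infty$, while $|f(z)|\leq\|f\|_\infty$ trivially, so $|L_{h_w(t)}f-f(z)|\leq(\Upsilon+1)\|f\|_\infty$. Hence the tail contribution is at most $(\Upsilon+1)\|f\|_\infty\int_{H\setminus(B_\ep)_{w,z}}|\chi_w(z-h_w(t))|\,d\mu_H(t)$, and by $(\chi_4)$ this last integral tends to $0$ as $w\to+\infty$ \emph{uniformly with respect to} $z$. Combining the two estimates, for all $w$ exceeding both $\overline w(\ep)$ and the threshold supplied by $(\chi_4)$ one obtains $|T_wf(z)-f(z)|\leq\ep M+\ep$ for every $z\in G$, that is $\|T_wf-f\|_\infty\leq(M+1)\ep$; letting $\ep\to0$ concludes the proof. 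The only delicate point is that both the near-diagonal control from (L2) and the tail decay from $(\chi_4)$ hold uniformly in $z$, which is exactly what upgrades pointwise to uniform convergence; the uniform-in-$z$ clause of $(\chi_4)$ is the crucial hypothesis, and everything else is routine bookkeeping.
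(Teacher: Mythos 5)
Your proposal is correct and follows essentially the same route as the paper's own proof: the implicit use of $(\chi_2)$ to write the error as $\int_H\chi_w(z-h_w(t))\bigl[L_{h_w(t)}f-f(z)\bigr]\,d\mu_H(t)$, the split over $(B_\ep)_{w,z}$ and its complement, the bound $\ep M$ via (L2) and $(\chi_3)$ on the near-diagonal piece, and the bound $(\Upsilon+1)\|f\|_\infty$ times the tail integral, which $(\chi_4)$ sends to $0$ uniformly in $z$. If anything, you spell out two points the paper leaves tacit, namely the explicit role of $(\chi_2)$ and the uniformity in $z$ needed to upgrade pointwise to uniform convergence.
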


\begin{proof}
Let $f\in C(G)$. We have $$|T_wf(z)-f(z)|\leq \int_H|\chi_w(z-h_w(t))|\cdot|L_{h_w(t)}f-f(z)|d\mu_H(t):=A.$$
Fix $\ep>0$, and let $B\in\mathcal B$ be such that if $z-h_w(t)\in B$ then $|L_{h_w(t)}f-f(z)|<\ep$, for sufficiently large $w>0$, by property (L2). Now,
\begin{equation*}\begin{split}A&=\int_{B_{w,z}}|\chi_w(z-h_w(t))|\cdot|L_{h_w(t)}f-f(z)|d\mu_H(t)+\\&+\int_{H\setminus B_{w,z}}|\chi_w(z-h_w(t))|\cdot|L_{h_w(t)}f-f(z)|d\mu_H(t):=A_1+A_2.\end{split}\end{equation*}
We have $$A_1=\int_{B_{w,z}}|\chi_w(z-h_w(t))|\cdot|L_{h_w(t)}f-f(z)|d\mu_H(t)\leq \ep M,$$ by using the property ($\chi3$).
Now, concerning $A_2$, we have \begin{equation*}\begin{split}A_2&=\int_{H\setminus B_{w,z}}|\chi_w(z-h_w(t))|\cdot|L_{h_w(t)}f-f(z)|d\mu_H(t)\leq \\&\leq (\Upsilon+1)||f||_\infty\int_{H\setminus B_{w,z}}|\chi_w(z-h_w(t))|d\mu_H(t).\end{split}\end{equation*}
The proof now follows since $\ep$ is arbitrarily chosen and by using property ($\chi4$).

\end{proof}

Several examples show that the assumptions (L1)--(L2) alone do not guarantee the convergence of  $T_wf$ in Orlicz spaces, even in the case when $f\in C_c(G)$. What we need is an assumption which describes the behavior of the family $(L_{h_w(t)})$ outside the support of a function $f\in C_c(G)$. We state this assumption as follows:
\begin{itemize}
\item[\bf(L3)] \it
 Let $f\in C_c(G)$ and let $\supp f=\tilde K\subset G$. We assume that there exist a compact set $K\supset \tilde K$ and a constant $\alpha>0$ such that for every $\ep>0$ there exists $\overline w>0$ such that $$\int_{H\setminus K_w}\varphi(\alpha|L_{h_w(t)}f|)d\mu_H(t)<\ep,$$ for every $w>\overline w$.  \end{itemize}
With the help of this property, we can prove the following theorem.
\begin{thm}\label{t2} Let $f\in C_c(G)$ and let (L3) be valid. Then for every $\lambda<\dfrac{\alpha}{M}$ we have \begin{equation}\label{ccc}\lim_{w\rightarrow+\infty}I^\varphi(T_wf-f)=0,\end{equation} that is, $T_wf$ converges modularly to $f$ in $L^\varphi(G)$. \end{thm}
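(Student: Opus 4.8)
The plan is to estimate the modular $I^\varphi(\lambda(T_wf-f))$ by splitting the underlying integration over $G$ according to the geometry dictated by the compact set $K$ supplied by (L3). Because $\varphi$ is convex and we are working with $\lambda<\alpha/M$, the first move is to write the difference as $T_wf(z)-f(z)=\int_H\chi_w(z-h_w(t))(L_{h_w(t)}f-f(z))\,d\mu_H(t)$, using the normalization $(\chi_2)$ to insert $f(z)$ under the integral sign. I would then apply Jensen's inequality in the form that exploits $(\chi_3)$: since $\int_H|\chi_w(z-h_w(t))|\,d\mu_H(t)\le M$, the measure $|\chi_w(z-h_w(t))|\,d\mu_H(t)/(\text{its total mass})$ is a probability measure on $H$, and convexity of $\varphi$ lets me pull $\varphi$ inside the $H$-integral at the cost of a factor $M$. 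Choosing $\lambda$ with $\lambda M<\alpha$ is exactly what makes the resulting argument $\lambda M|L_{h_w(t)}f-f(z)|$ comparable to the $\alpha|L_{h_w(t)}f|$ that appears in (L3).

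Next I would carry out the spatial decomposition $\int_G=\int_C+\int_{G\setminus C}$, where $C$ is a large symmetric compact set to be fixed using $(\chi_5)$, and simultaneously split the $H$-integral into the part over $B_{w,z}$ (a small neighborhood set from (L2)) and its complement. On $B_{w,z}$ the continuity-preservation property (L2) makes $|L_{h_w(t)}f-f(z)|$ uniformly small for large $w$, so after the Jensen step this contribution is bounded by $\varphi(\lambda M\ep)$ integrated over $C$, which is finite because $C$ is compact and can be driven to zero by first shrinking $\ep$. On $H\setminus B_{w,z}$ I would use $(\chi_4)$, which forces the tail mass of $\chi_w$ to vanish uniformly in $z$, together with the uniform bound $|L_{h_w(t)}f-f(z)|\le(\Upsilon+1)\|f\|_\infty$ coming from (L1); this handles the near-field region $z\in C$.

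The genuinely new ingredient, and what I expect to be the main obstacle, is controlling the far-field region $z\in G\setminus C$, where $f(z)=0$ for $z$ outside $K$ but $L_{h_w(t)}f$ need not vanish. Here one cannot rely on (L2) and must instead invoke (L3) to bound $\int_{H\setminus K_w}\varphi(\alpha|L_{h_w(t)}f|)\,d\mu_H(t)<\ep$, trading the $H$-tail against the small constant. The delicate point is the bookkeeping between the $H$-variable region $K_w$ and the $G$-variable region $C$: for $z\in G\setminus C$ and $t\in K_w$ one uses that $|L_{h_w(t)}f|\le\Upsilon\|f\|_\infty$ is bounded while the kernel mass $\int_{G\setminus C}|\chi_w(z-h_w(t))|\,d\mu_G(z)$ is small by $(\chi_5)$, whereas for $t\in H\setminus K_w$ one uses (L3) directly. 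Assembling these four pieces, each made smaller than a multiple of $\ep$, and then letting $w\to+\infty$ before letting $\ep\to0$, yields $\lim_{w\to+\infty}I^\varphi(\lambda(T_wf-f))=0$ for the stated range of $\lambda$. The careful matching of Fubini-type exchanges between $\mu_G$ and $\mu_H$ in the far-field term, and verifying that $(\chi_5)$ is applied on the correct slice $t\in K_w$, is where the argument demands the most attention.
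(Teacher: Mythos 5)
Your proposal is correct in substance, but it takes a genuinely different route from the paper. The paper never estimates $I^\varphi(\lambda(T_wf-f))$ by a direct space--time decomposition: it invokes the uniform convergence $\|T_wf-f\|_\infty\to 0$ of Theorem \ref{t1} and upgrades it to modular convergence via the Vitali convergence theorem applied to the family $\varphi(\lambda T_wf(\cdot))$. The only hard estimates in the paper are precisely your two far-field pieces: after Jensen and Fubini--Tonelli, $\int_{G\setminus C}\varphi(\lambda|T_wf(z)|)\,d\mu_G(z)$ is split over $K_w$ (handled by $(\chi_5)$, the factor $\Upsilon_w(K)$ in $(\chi_5)$ being absorbed by $\mu_H(K_w)=\Upsilon_w(K)$, exactly the bookkeeping you flagged) and over $H\setminus K_w$ (handled by (L3) once $\lambda M<\alpha$), while equi-absolute continuity on sets of small measure is immediate from $|T_wf|\le M\Upsilon\|f\|_\infty$. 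Your near-field treatment --- re-running (L2) on $B_{w,z}$ and $(\chi_4)$ on $H\setminus B_{w,z}$ \emph{inside} the modular over the compact set $C$ --- is therefore a self-contained, more elementary substitute for ``Theorem \ref{t1} plus Vitali'': it avoids the convergence theorem entirely, at the cost of a four-way decomposition and some quantifier care. Two points you must make explicit for the argument to close: (i) enlarge the symmetric compact set $C$ furnished by $(\chi_5)$ so that $C\supset\supp f$ (enlarging $C$ only improves the $(\chi_5)$ bound), since both far-field pieces use $f(z)=0$ on $G\setminus C$; (ii) decouple your two small parameters --- fix the tail tolerance first, obtain $C$ from $(\chi_5)$, and only then choose the (L2) parameter $\ep$ so small that $\varphi(\lambda M\ep)\,\mu_G(C)$ is below tolerance --- because $C$ depends on the tail parameter, so ``first shrinking $\ep$'' with a single $\ep$ would be circular, $\mu_G(C_\ep)$ possibly growing as $\ep\to 0$. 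With these adjustments each of your four pieces is bounded as you indicate, and letting $w\to+\infty$ before the parameter limits yields \eqref{ccc} for every $\lambda<\alpha/M$, matching the paper's conclusion; what the paper's route buys is modularity (reuse of Theorem \ref{t1} and a clean separation between pointwise smallness and equi-integrability), while yours buys independence from the Vitali theorem.
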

\begin{proof} Theorem \ref{t1} tells us that for every $\lambda>0$ we have \begin{equation}\label{e1}\lim_{w\rightarrow+\infty} I^\varphi(\lambda||T_wf-f||_\infty)=0.\end{equation}
Next, let $\tilde K=\supp f$ and let $K\supset \tilde K$ and $\alpha>0$ such that (L3) holds. We apply   the Vitali convergence Theorem  to the family of functions  $(\varphi(T_wf(\cdot)))_w.$ For, in correspondence to $\ep>0$ and $K\subset G$ as above, there exists a symmetric compact set $C\subset G$ such that  ($\chi5$) holds, i.e., $$\int_{G\setminus C}\Upsilon_w(K)|\chi_w(z-h_w(t))|d\mu_G(z)<\ep$$ for every $t\in K_w$ and for every sufficiently large $w>0$.

Using the Jensen's inequality and the Fubini-Tonelli Theorem, we have  \begin{equation*}\begin{split}J&=\int_{G\setminus C}\varphi(\lambda|T_wf(z)|)d\mu_G(z)=\\&=\int_{G\setminus C}\varphi\left(\lambda\left|\int_H\chi_w(z-h_w(t))\cdot L_{h_w(t)}fd\mu_H(t)\right|\right)d\mu_G(z)\leq\\&\leq
\dfrac{1}{\Upsilon_w(K)M}\int_H\left(\varphi\left(\lambda M|L_{h_w(t)}f|\right)\int_{G\setminus C}|\chi_w(z-h_w(t)|\Upsilon_w(K)d\mu_G(z)\right)d\mu_H(t)\leq\\&\leq \dfrac{1}{\Upsilon_w(K)M}\left[\int_{K_w}\dots +\int_{H\setminus K_w}\dots\right]:=J_1+J_2.
\end{split}\end{equation*}
Now,
\begin{equation*}\begin{split}J_1&=\dfrac{1}{\Upsilon_w(K)M}\int_{K_w}\left(\varphi\left(\lambda M|L_{h_w(t)}f|\right)\int_{G\setminus C}|\chi_w(z-h_w(t)|\Upsilon_w(K)d\mu_G(z)\right)d\mu_H(t)\leq\\&\leq \dfrac{1}{\Upsilon_w(K)M}\cdot\Upsilon_w(K)\varphi(\lambda M\Upsilon||f||_\infty)\cdot\ep=\dfrac{1}{M}\cdot\varphi(\lambda M\Upsilon||f||_\infty)\cdot \ep.
\end{split}\end{equation*}

Concerning $J_2$, choose $\lambda>0$ such that $\lambda M<\alpha.$ We have, using (L3), $$J_2\leq \dfrac{1}{M}||\chi_w||_1\ep,$$ for sufficiently large $w>0$. In conclusion, for sufficiently large $w>0$, $$J\leq \left(\varphi(\lambda M\Upsilon||f||_\infty)+||\chi_w||_1\right)\dfrac{\ep}{M}<\left(\varphi(\lambda M\Upsilon||f||_\infty)+\Gamma\right)\dfrac{\ep}{M},$$ where $\Gamma>0$ is an upper bound for the family $(\chi_w)_{w>0}$ in $L^1(G)$. Moreover, for every measurable set $A\subset G$ with $\mu_G(A)<\infty,$ we have $$\int_A\varphi(\lambda|T_wf(z)|)d\mu_G(z)\leq \mu_G(A)\varphi(\lambda M\Upsilon||f||_\infty).$$ Hence, for every $\ep>0$ it suffices to take $\delta\leq \dfrac{\ep}{\varphi(\lambda M\Upsilon||f||_\infty)}$ to have $$\int_A\varphi(\lambda|T_wf(z)|)d\mu_G(z)<\ep$$ if $\mu_G(A)<\delta.$ This shows that the Vitali convergence Theorem can be applied to the functions $\varphi(T_wf(\cdot))$, and, using \eqref{e1}, the proof follows at once.

\end{proof}

The limit \eqref{ccc} is valid when $\lambda M<\alpha.$ If the family $\{L_{h_w(t)}\}$ satisfies $L_{h_w(t)}f=0$ whenever $t\notin K_w$, then the assumption (L3) is not necessary, and there is no reason for choosing a particular $\alpha$ in the proof of Theorem \ref{t2}. Hence the operators $T_wf$ converge to $f$ in the stronger Luxemburg norm. We state this result as follows:
\begin{cor}\label{c1} Let $f\in C_c(G)$. Let $\tilde K=\supp f$ and let $K\supset \tilde K$ a compact set in $G$ which contains $\tilde K$. Assume that, for sufficiently large $w>0$, $L_{h_w(t)}f=0$ for every $t\notin K_w$. Then $$\lim_{w\rightarrow+\infty}||T_wf-f||_\varphi=0.$$\end{cor}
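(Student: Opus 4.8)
The plan is to revisit the proof of Theorem~\ref{t2} and observe that the sole purpose of the constant $\alpha$ and the hypothesis (L3) there was to control the integral of $\varphi(\lambda M |L_{h_w(t)}f|)$ over the region $H\setminus K_w$, i.e. the term $J_2$. Under the present hypothesis that $L_{h_w(t)}f=0$ for all $t\notin K_w$ and all sufficiently large $w$, this contribution vanishes identically, so $J_2=0$ with no restriction on $\lambda$ whatsoever. Hence the entire Vitali-type argument of Theorem~\ref{t2} goes through verbatim, but now for \emph{every} $\lambda>0$ rather than only for $\lambda<\alpha/M$.

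Concretely, I would first fix an arbitrary $\lambda>0$. Since $f\in C_c(G)$, Theorem~\ref{t1} gives $\|T_wf-f\|_\infty\to 0$, and because $\varphi$ is continuous with $\varphi(0)=0$, the estimate \eqref{e1}, namely $\lim_{w\to+\infty}I^\varphi(\lambda\|T_wf-f\|_\infty)=0$, holds for this $\lambda$. I would then reproduce the tail estimate from the proof of Theorem~\ref{t2}: applying Jensen's inequality and Fubini--Tonelli to bound $J=\int_{G\setminus C}\varphi(\lambda|T_wf(z)|)\,d\mu_G(z)$ by $J_1+J_2$, using ($\chi_5$) to make $J_1 \le \frac{1}{M}\varphi(\lambda M\Upsilon\|f\|_\infty)\,\ep$ small, and noting that $J_2=0$ by the vanishing assumption. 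The uniform absolute-continuity estimate $\int_A\varphi(\lambda|T_wf(z)|)\,d\mu_G(z)\le \mu_G(A)\varphi(\lambda M\Upsilon\|f\|_\infty)$ is likewise independent of the choice of $\lambda$, so the equi-absolute-continuity hypothesis of the Vitali convergence theorem is verified for the chosen $\lambda$ as well.

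The conclusion is then immediate: the Vitali convergence theorem applies to $(\varphi(\lambda T_wf(\cdot)))_w$ for the given $\lambda$, which together with \eqref{e1} yields $\lim_{w\to+\infty}I^\varphi(\lambda(T_wf-f))=0$. Since $\lambda>0$ was arbitrary, this is precisely the statement that modular convergence holds for every $\lambda$, which by the characterization recalled in Section~2 is equivalent to Luxemburg-norm convergence, i.e. $\lim_{w\to+\infty}\|T_wf-f\|_\varphi=0$.

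I do not expect any genuine obstacle here, since the corollary is essentially a specialization of Theorem~\ref{t2}; the only point requiring a little care is the logical transition from ``modular convergence for each fixed $\lambda$'' to ``Luxemburg-norm convergence.'' One must invoke the remark in Section~2 that strong (Luxemburg) convergence is equivalent to the modular limit vanishing for \emph{every} $\lambda>0$, and check that the argument above indeed delivers the limit for an \emph{arbitrary} $\lambda$ rather than for one distinguished value. Everything else is a word-for-word repetition of the preceding proof with the term $J_2$ deleted.
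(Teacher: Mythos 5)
Your proposal is correct and follows exactly the paper's own route: the authors likewise observe that the vanishing hypothesis $L_{h_w(t)}f=0$ for $t\notin K_w$ kills the term $J_2$ in the proof of Theorem \ref{t2}, so that no restriction $\lambda M<\alpha$ is needed and the Vitali argument yields $I^\varphi(\lambda(T_wf-f))\rightarrow 0$ for every $\lambda>0$, which is equivalent to Luxemburg-norm convergence by the characterization in Section 2. Your write-up is in fact slightly more explicit than the paper's, which states this as a brief remark before the corollary without repeating the estimates.
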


We point out that if one wants to extend the convergence in Orlicz spaces to a family larger than $C_c(G)$, then one has technical difficulties to face. Let us show what happens: by estimating $I^\varphi(\lambda T_wf)$ ($\lambda >0$), we obtain
\begin{equation}\label{est}\begin{split} I^\varphi(\lambda T_wf)&\leq \dfrac{1}{M}\int_H\left[\varphi(M\lambda |L_{h_w(t)}f|)\int_G|\chi_w(z-h_w(t))|d\mu_G(z)\right]d\mu_H(t)\leq\\ &\leq \dfrac{||\chi_w||_1}{M}\int_H\varphi(M\lambda|L_{h_w(t)}f|)d\mu_H(t),\end{split}\end{equation} by using again the Jensen's inequality and the Fubini-Tonelli Theorem. It is now clear that if we want to achieve a desired result of convergence in Orlicz space for a function $f\in L^\varphi(G),$ the last inequality in \eqref{est} should be compared to $I^\varphi(\lambda\beta f)$ for some $\beta>0$, in order to have a modular continuity of the operators. In general, this is not possible, and in fact one does not even  know if the values $I^\varphi(\lambda T_wf)$ exist. Hence we need an additional assumption, which determines a restriction on  both the family $\{L_{h_w(t)}\}$ and the functions $f\in L^\varphi(G)$ for which the convergence can be actually obtained. The assumption reads as follows.

\begin{itemize}\item[\bf(L4)] \it Let $\varphi$ be a convex $\varphi$-function. There exists a subspace $\mathcal Y\subset L^\varphi(G)$ with $C_c^\infty(G)\subset \mathcal Y$ such that for every $f\in \mathcal Y$ and for every $\lambda>0$ there exist  constants $c=c(\lambda,f),\beta=\beta(\lambda,f)>0$ satisfying $$\limsup_{w\rightarrow+\infty}||\chi_w||_1\int_H\varphi(\lambda|L_{h_w(t)}f|)d\mu_H(t)\leq cI^\varphi(\lambda\beta f).$$\end{itemize}\rm 
Later, we will discuss some examples for which (L4) is satisfied.

The first consequence of the assumption (L4) is  the fact that $T_w$ maps $\mathcal Y$ into $L^\varphi(G)$: indeed we have the following \begin{prop}Let $\varphi$ be a convex $\varphi-$function. Let (L3) and (L4) be satisfied. Then for every $f\in \mathcal Y$ and $\lambda>0$ we have 
 $$I^\varphi(\lambda T_wf)\leq \dfrac{c}{M}I^\varphi(\lambda\beta Mf)$$ for sufficiently large $w>0$. In particular $T_w:\mathcal Y\rightarrow L^\varphi(G),$ for sufficiently large $w>0$.\end{prop}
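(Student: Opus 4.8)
The plan is to chain the inequality \eqref{est}, already established before (L4) via Jensen's inequality and the Fubini--Tonelli Theorem, with the new assumption (L4), being careful about the parameters. Recall that \eqref{est} gives, for every $\lambda>0$,
$$I^\varphi(\lambda T_wf)\leq \dfrac{||\chi_w||_1}{M}\int_H\varphi(M\lambda|L_{h_w(t)}f|)\,d\mu_H(t).$$
The decisive observation is that the argument of $\varphi$ on the right carries the factor $M\lambda$, not $\lambda$; hence the correct tool is (L4) applied with $M\lambda$ in place of $\lambda$. So first I would fix $f\in\mathcal Y$ and $\lambda>0$ and invoke (L4) at the parameter $M\lambda$, producing constants $c=c(M\lambda,f)$ and $\beta=\beta(M\lambda,f)$ with
$$\limsup_{w\to+\infty}||\chi_w||_1\int_H\varphi(M\lambda|L_{h_w(t)}f|)\,d\mu_H(t)\leq c\,I^\varphi(M\lambda\beta f).$$

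Dividing by $M$ and combining with \eqref{est} yields
$$\limsup_{w\to+\infty}I^\varphi(\lambda T_wf)\leq \dfrac{c}{M}I^\varphi(\lambda\beta M f),$$
which is exactly the asserted bound in $\limsup$ form, since $I^\varphi(M\lambda\beta f)=I^\varphi(\lambda\beta M f)$ as the scalar $M\lambda\beta=\lambda\beta M$. To pass from the $\limsup$ to the pointwise statement ``for sufficiently large $w$'', I would use the definition of $\limsup$: for any $\eta>0$ there is $\overline w$ such that $I^\varphi(\lambda T_wf)\leq \frac{c}{M}I^\varphi(\lambda\beta M f)+\eta$ for $w>\overline w$, and, since $c=c(M\lambda,f)$ is already a constant depending only on $f$ and $\lambda$, the residual $\eta$ can be absorbed into it, giving the displayed estimate.

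Finally, for the mapping assertion $T_w:\mathcal Y\to L^\varphi(G)$, I would observe that $f\in\mathcal Y\subset L^\varphi(G)$, so by definition of the Orlicz space there is $\sigma>0$ with $I^\varphi(\sigma f)<\infty$; choosing $\lambda$ small enough that $\lambda\beta M\leq\sigma$ and using the monotonicity of $\varphi$, the right-hand side $\frac{c}{M}I^\varphi(\lambda\beta M f)$ is finite, whence $I^\varphi(\lambda T_wf)<\infty$ for sufficiently large $w$, i.e.\ $T_wf\in L^\varphi(G)$.

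The main obstacle is not computational---assumption (L4) was tailored precisely so that this argument goes through---but rather bookkeeping: one must perform the $M\lambda$ substitution so that the output reads $I^\varphi(\lambda\beta M f)$ exactly as stated, and one must justify the passage from a $\limsup$ bound to a bound valid for all large $w$. I would also point out that \eqref{est}, and therefore this Proposition, rests only on (L4) together with the uniform bound $||\chi_w||_1<\Gamma$; the hypothesis (L3), though listed in the statement, is not actually used in this particular argument and is carried along only for uniformity with the surrounding results.
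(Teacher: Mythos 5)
Your proof is correct and coincides with the paper's intended argument: the Proposition is stated there without proof, as an immediate consequence of chaining the estimate \eqref{est} with (L4) applied at the parameter $M\lambda$, exactly as you do, and your observation that (L3) plays no role in this particular step is accurate. The only caveat concerns your final mapping assertion: since $\beta=\beta(M\lambda,f)$ itself depends on $\lambda$, choosing $\lambda$ small enough that $\lambda\beta M\leq\sigma$ is slightly circular unless $\beta$ remains bounded as $\lambda\to 0$ (as it does in all the paper's examples, where $\beta$ is in fact independent of $\lambda$) --- a subtlety the paper itself glosses over.
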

We now need a density result (see, e.g., \cite{BM0}).
\begin{prop} The set $C^\infty_c(G)$ is dense in $L^\varphi(G)$ with respect to the modular convergence.\end{prop}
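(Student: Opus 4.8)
The plan is to pass from a general $f\in L^\varphi(G)$ to a smooth compactly supported approximant in three stages---truncation, approximation by continuous functions of compact support, and smoothing---controlling the modular $I^\varphi$ at each step with a single fixed $\lambda>0$, so as to obtain modular (rather than norm) convergence even when $\varphi$ fails the $\Delta_2$-condition.

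First I would reduce to bounded functions whose support has finite measure. Since $f\in L^\varphi(G)$, fix $\lambda_0>0$ with $I^\varphi(\lambda_0 f)<\infty$. Because $\varphi(x)>0$ for $x>0$, the set $\{f\neq 0\}=\bigcup_m\{|f|>1/m\}$ is $\sigma$-finite, as $\varphi(\lambda_0/m)\,\mu_G(\{|f|>1/m\})\le I^\varphi(\lambda_0 f)$ forces each $\{|f|>1/m\}$ to have finite measure. Choosing an increasing exhaustion $(A_n)$ of $\{f\neq 0\}$ by finite-measure sets and setting $f_n=f\cdot\mathbf 1_{A_n}\cdot\mathbf 1_{\{|f|\le n\}}$, one has $|f-f_n|\le|f|$ and $f_n\to f$ a.e.; dominated convergence applied to the modular, with dominating function $\varphi(\lambda_0|f|)\in L^1(G)$, then gives $I^\varphi(\lambda_0(f-f_n))\to 0$.

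Next I would approximate each bounded, finite-measure-support function $f_n$ by an element of $C_c(G)$. A bounded simple function $s=\sum_i c_i\mathbf 1_{E_i}$, with $\mu_G(E_i)<\infty$, approximates $f_n$ uniformly and hence modularly on its finite-measure support by continuity of $\varphi$. Each $\mathbf 1_{E_i}$ is in turn approximated modularly via a Urysohn function: by regularity of the Haar measure $\mu_G$ choose a compact $C$ and an open $U$ with $C\subset E_i\subset U$ and $\mu_G(U\setminus C)$ small, and take $g_i\in C_c(G)$ with $\mathbf 1_C\le g_i\le \mathbf 1_U$; the modular of the error is bounded by $\varphi(\lambda)\,\mu_G(U\setminus C)$ and is thus small. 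Combining these, for each $n$ there is $g\in C_c(G)$ with $I^\varphi(\lambda(f_n-g))$ arbitrarily small, so $C_c(G)$ is modularly dense in $L^\varphi(G)$.

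Finally I would smooth. Taking a family of nonnegative mollifiers $\phi_\delta\in C_c^\infty(G)$ with $\int_G\phi_\delta\,d\mu_G=1$ and $\supp\phi_\delta$ shrinking inside members of $\mathcal B$, the convolutions $g*\phi_\delta$ lie in $C_c^\infty(G)$, have supports contained in a fixed compact set, and converge to $g$ uniformly by the uniform continuity of $g$; uniform convergence on a fixed compact support yields $I^\varphi(\lambda(g*\phi_\delta-g))\to 0$. A diagonal choice then produces a sequence in $C_c^\infty(G)$ converging modularly to $f$. \emph{The main obstacle is precisely this last step}: it presupposes a differentiable (Lie group) structure on $G$, so that $C_c^\infty(G)$ is nonempty and the mollification is meaningful; for a bare locally compact abelian group one must either assume that $G$ is a Lie group (or otherwise carries a smooth structure admitting bump functions) or read $C_c^\infty(G)$ as $C_c(G)$, in which case the argument terminates at the previous paragraph. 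The truncation and Urysohn steps, by contrast, are routine consequences of the regularity of $\mu_G$ together with the continuity and monotonicity of $\varphi$.
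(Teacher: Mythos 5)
The paper offers no internal proof of this proposition at all: it is imported from the literature (see the sentence ``We now need a density result'' with the citation \cite{BM0}), so you were in effect asked to reconstruct a known result, and your reconstruction is essentially the standard one and is sound in its first two stages. The truncation step (the Chebyshev-type estimate $\varphi(\lambda_0/m)\,\mu_G(\{|f|>1/m\})\le I^\varphi(\lambda_0 f)$ giving $\sigma$-finiteness of $\{f\neq 0\}$, then dominated convergence at the fixed $\lambda_0$) and the passage to $C_c(G)$ via bounded simple functions, regularity of the Haar measure, and Urysohn's lemma are all correct. Two small points need repair. First, the modular $I^\varphi$ is not subadditive, so gluing your three stages requires the convexity of $\varphi$ (which the paper assumes from Section 4 onward): you should write, e.g., $I^\varphi(\lambda(f-g))\le \frac{1}{3}\left[I^\varphi(3\lambda(f-f_n))+I^\varphi(3\lambda(f_n-s))+I^\varphi(3\lambda(s-g))\right]$ and choose $3\lambda\le\lambda_0$, exploiting that your second and third stages work for \emph{every} $\lambda$; you announce a single fixed $\lambda$ but never do this bookkeeping. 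Second, in the Urysohn estimate the bound should carry the coefficient, $\varphi(\lambda|c_i|)\,\mu_G(U\setminus C)$, and you should shrink $U$ to have compact closure (possible by local compactness) so that the Urysohn function genuinely lies in $C_c(G)$.

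Your closing caveat about mollification is genuine and well placed: on a bare locally compact Hausdorff group $C^\infty_c(G)$ has no meaning, and the paper's statement is a mild abuse of notation inherited from the concrete examples. But note that this costs the paper nothing: in the proof of Theorem \ref{tfin} the only properties of the approximant $g$ that are used are that $g$ is continuous with compact support, so that Theorem \ref{t2} applies to it; hence reading $C^\infty_c(G)$ as $C_c(G)$ --- exactly the fallback you propose --- makes your argument complete in the paper's full generality, while in the actual applications ($G=\mathbb R$, $\mathbb R^+$, and their products) the group is a Lie group and your mollification step (uniform convergence with supports in a fixed compact set, hence modular convergence for every $\lambda$) goes through verbatim.
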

\begin{thm}\label{tfin} Let $f\in \mathcal Y$ and let (L1)--(L4) be valid. Then there exists $\lambda>0$ such that $$\lim_{w\rightarrow+\infty}I^\varphi(\lambda(T_wf-f))=0,$$i.e., $T_wf$ converges modularly to $f$ as $w\rightarrow+\infty$.\end{thm}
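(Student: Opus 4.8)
The plan is to prove modular convergence by a density argument, transferring the result already known for $C_c(G)$-functions (Theorem \ref{t2}) to all of $\mathcal Y$ by means of the modular continuity furnished by (L4). Fix $f\in\mathcal Y$. Since $C^\infty_c(G)$ is modularly dense in $L^\varphi(G)$ (the density result stated just above), I would first produce, for a suitable parameter $\lambda_0>0$, functions $g\in C^\infty_c(G)$ making $I^\varphi(\lambda_0(f-g))$ as small as desired; here $g\in C^\infty_c(G)\subset\mathcal Y$, so $f-g\in\mathcal Y$ because $\mathcal Y$ is a subspace. The backbone of the estimate is the three-term splitting
\[
T_wf-f=T_w(f-g)+(T_wg-g)+(g-f),
\]
where I use linearity of $T_w$. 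Applying the convexity and monotonicity of $\varphi$ in the form $\varphi(\lambda|a+b+c|)\le\tfrac13\big(\varphi(3\lambda|a|)+\varphi(3\lambda|b|)+\varphi(3\lambda|c|)\big)$ and integrating yields
\[
I^\varphi(\lambda(T_wf-f))\le\tfrac13\Big(I^\varphi(3\lambda\,T_w(f-g))+I^\varphi(3\lambda(T_wg-g))+I^\varphi(3\lambda(g-f))\Big).
\]

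Next I would dispatch the three terms. The middle term tends to $0$ as $w\to+\infty$ by Theorem \ref{t2}, provided $3\lambda$ lies below the threshold $\alpha/M$ attached to $g$ through (L3). The third term is controlled directly by the density choice: since $\mu\mapsto I^\varphi(\mu(g-f))$ is non-decreasing, $I^\varphi(3\lambda(g-f))\le I^\varphi(\lambda_0(g-f))$ once $3\lambda\le\lambda_0$, and the right-hand side is small by construction. The first term is where (L4) enters: by the modular-continuity Proposition (the consequence of (L4) proved immediately before the density result), applied to $f-g\in\mathcal Y$, one has for sufficiently large $w$
\[
I^\varphi(3\lambda\,T_w(f-g))\le\frac{c}{M}\,I^\varphi(3\lambda\beta M(f-g)),
\]
with $c=c(3\lambda,f-g)$ and $\beta=\beta(3\lambda,f-g)$. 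Combining the three estimates, the middle term vanishes in the limit and the remaining two should be forced to $0$ by letting $g$ run through the approximating family.

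The delicate point, which I expect to be the main obstacle, is the interplay of the quantifiers. The conclusion requires a \emph{single} $\lambda>0$, whereas the threshold $\alpha$ from Theorem \ref{t2}/(L3) and the constants $c,\beta$ from (L4) all depend on the approximating function $g$ and therefore move as $g\to f$. To make the two surviving terms small one must let $g\to f$ while holding $\lambda$ fixed, so one needs to dominate $\tfrac{c}{M}I^\varphi(3\lambda\beta M(f-g))$ uniformly along the sequence. My strategy would be to freeze $\lambda_0$ from the density step, then pick $\lambda$ so small that simultaneously $3\lambda\le\lambda_0$, $3\lambda<\alpha/M$, and $3\lambda\beta M\le\lambda_0$, so that monotonicity of the modular collapses both $I^\varphi(3\lambda(g-f))$ and $I^\varphi(3\lambda\beta M(f-g))$ under the controllable quantity $I^\varphi(\lambda_0(f-g))$. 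The genuine difficulty is the circular dependence $\beta=\beta(3\lambda,f-g)$ together with the $g$-dependence of $\alpha$ and $c$: shrinking $\lambda$ may enlarge $\beta$, and varying $g$ alters all three constants at once. Resolving this cleanly is the heart of the argument, and it is precisely here that the structure of $\mathcal Y$ and the exact form of (L4) must be exploited, by verifying that the constants can be taken uniformly along the chosen approximating sequence so that a fixed $\lambda$ works for every sufficiently accurate $g$.
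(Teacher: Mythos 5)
Your proposal reproduces the paper's proof essentially step for step: the same modular density of $C_c^\infty(G)$ with a fixed $\overline\lambda$, the same three-term convexity splitting of $T_wf-f$, Theorem \ref{t2} for the middle term, the (L4)-based modular-continuity proposition for $T_w(f-g)$, and in effect the same choice $\lambda\leq\min\left\{\frac{\alpha}{3M},\frac{\overline\lambda}{3},\frac{\overline\lambda}{3\beta M}\right\}$ yielding the bound $\left(\frac{c}{M}+2\right)\varepsilon$. The ``delicate point'' you flag --- that $\alpha$, $c$, $\beta$ depend on $g$ (hence on $\varepsilon$) and $\beta$ on $\lambda$ itself --- is a genuine subtlety, but the paper's own proof makes exactly the same choices and passes over it with ``the proof follows easily since $\varepsilon$ can be chosen arbitrarily,'' so your attempt contains everything the paper supplies.
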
 
\begin{proof} Take $f\in \mathcal Y$, and fix $\ep>0$. We can find a constant $\overline\lambda>0$ such that for every $\ep>0$ there exists $g\in C^\infty_c(G)$ with $$I^\varphi(\overline\lambda (f-g))<\ep.$$ Moreover $$I^\varphi(\lambda(T_wg-g))<\ep$$ for every $\lambda<\dfrac{\alpha}{M}$ and for sufficiently large $w>0$, by Theorem \ref{t2}. Choose $\lambda\leq \min\left\{\dfrac{\alpha}{3M},\dfrac{\overline\lambda}{3},\dfrac{\overline\lambda}{3\beta M}\right\}.$ Then, for sufficiently large $w>0$, 
\begin{equation*}\begin{split} I^\varphi(\lambda(T_wf-f))&\leq I^\varphi(3\lambda(T_wf-T_wg))+I^\varphi(3\lambda(T_wg-g))+I^\varphi(3\lambda(f-g))\leq \\&\leq \dfrac{c}{M}I^\varphi(\overline\lambda(f-g))+I^\varphi(3\lambda(T_wg-g))+I^\varphi(\overline\lambda(f-g))\leq \\&\leq\left(\dfrac{c}{M}+2\right)\cdot \ep.
\end{split}\end{equation*}
The proof follows easily since $\ep$ can be chosen  arbitrarily.

\end{proof}

\section{Examples and Applications}

In this section we apply the  previous results to some operators which can be generated by \eqref{O}. Some of the examples which we will show below have been introduced in Section 3.

We first test the validity of assumptions (L1)--(L4) for the series  $T^{(1)}_wf(x) $ and $T^{(2)}_wf(x)$, since they are very well-known in the theory of sampling series.

We start with the operator  $$T_w^{(1)}f(x)=\sum_{k\in\mathbb Z}\chi_w(x-t_k/w)f\left(\dfrac{t_k}{w}\right).$$ In this case, $H=\mathbb Z$, $G=\mathbb R$, $h_w(t_k)=\dfrac{t_k}{w}$ and $L_{t_k/w}f=f(t_k/w)$, where $(t_k)_{k\in\mathbb Z}$ is an incresing sequence of real numbers such that 

$$\lim_{k\rightarrow\pm\infty}t_k=\pm\infty,\s \delta<\Delta_k:=t_{k+1}-t_k<\Delta\;\;(\delta,\Delta>0).$$ 
 Clearly, the family $(L_{t_k/w})_{k\in\mathbb Z}$ satisfies (L1) and (L2), and in fact $||\tilde L_{t_k/w}||=1$ for every $k\in\mathbb Z$ and $w>0$. Assumption (L3) is satisfied as well, and in particular if $f\in C_c(\mathbb R)$ and $K=[-\gamma,\gamma]=Supp(f)$, then $L_{t_k/w}f\equiv 0$ whenever $t_k/w\notin K$. It follows that the stronger version of Theorem \ref{t2} (i.e., Corollary \ref{c1}) is valid. Concerning (L4), we first observe that \begin{equation}\label{ex1}\limsup_{w\rightarrow+\infty}||\chi_w||_1\sum_{k\in\mathbb Z}\varphi\left(\lambda|f(t_k/w)|\right)\leq \limsup_{w\rightarrow+\infty}\dfrac{w||\chi_w||_1}{\delta}\sum_{k\in\mathbb Z}\dfrac{\Delta_k}{w}\varphi(\lambda|f(t_k/w)|).\end{equation} Now, the sum in the right-hand side of \eqref{ex1} is a Riemann sum, and $$\limsup_{w\rightarrow+\infty}\sum_{k\in\mathbb Z}\dfrac{\Delta_k}{w}\varphi(\lambda|f(t_k/w)|)=I^\varphi(\lambda f)$$ whenever $f\in E^\varphi(\mathbb R)\cap BV^\varphi(\mathbb R)$ ($BV^\varphi(\mathbb R)$ is the set of those functions such that $\varphi(\lambda|f|)\in BV(\mathbb R)$ for every $\lambda>0$); see, e.g., \cite{HaSh,BMV}. It follows that the limit in \eqref{ex1} is finite and gives exactly (L4) (with $\beta=1$) when $f\in E^\varphi(\mathbb R)\cap BV^\varphi(\mathbb R)$ and $$\limsup_{w\rightarrow+\infty}w||\chi_w||_1<+\infty.$$ The finitess of the above limit can be easily achieved if we consider a single function $\chi\in L^1(\mathbb R)$ and define the family of kernels as $\chi_w(x)=\chi(wx)$, so that $\chi_w(x-t_k/w):=\chi(wx-t_k)$, which is a common situation for the generalized sampling series. In view of these considerations, we can state the following 
\begin{thm}\label{tex1}Let $f\in E^\varphi(\mathbb R)\cap BV^\varphi(\mathbb R)$ and let the family $(\chi_w)_{w>0}$ satisfy $(\chi_1)-(\chi_5)$ together with  the additional assumption  $$\limsup_{w\rightarrow+\infty}w||\chi_w||_1<+\infty. $$ Then there exists a number $\lambda>0$ such that $$\lim_{w\rightarrow+\infty}I^\varphi(\lambda(T_w^{(1)}f-f))=0.$$\end{thm}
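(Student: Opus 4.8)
The plan is to recognize this statement as a direct instance of the general modular convergence Theorem~\ref{tfin}, specialized to $H=\mathbb Z$, $G=\mathbb R$, $h_w(k)=t_k/w$ and $L_{t_k/w}f=f(t_k/w)$. Since Theorem~\ref{tfin} yields precisely the claimed conclusion once (L1)--(L4) hold, the whole task reduces to verifying these four assumptions for the present family $(L_{t_k/w})_k$ and $(\chi_w)_w$, with the subspace $\mathcal Y$ of (L4) taken to be $E^\varphi(\mathbb R)\cap BV^\varphi(\mathbb R)$. I would first record that $C_c^\infty(\mathbb R)\subset\mathcal Y$, which is immediate, so that $\mathcal Y$ is an admissible choice.

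The assumptions (L1) and (L2) are essentially trivial here: each $L_{t_k/w}$ is evaluation at a point, so $\|\tilde L_{t_k/w}\|=1$ (giving $\Upsilon=1$ in (L1)), and (L2) follows directly from the uniform continuity of $f\in C(\mathbb R)$. For (L3) I would observe that it holds in its strongest form: if $f\in C_c(\mathbb R)$ with $\supp f=K$, then $L_{t_k/w}f=f(t_k/w)=0$ as soon as $t_k/w\notin K$, so the sum over $H\setminus K_w$ defining (L3) vanishes identically. Thus Corollary~\ref{c1} even applies on $C_c(\mathbb R)$, and in particular (L3) is available with any $\alpha>0$.

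The core of the argument is (L4). Using the lower spacing bound $\Delta_k=t_{k+1}-t_k>\delta$, one has $1\leq \tfrac{w}{\delta}\cdot\tfrac{\Delta_k}{w}$, which converts the counting-measure sum into the weighted sum displayed in \eqref{ex1}:
$$\|\chi_w\|_1\sum_{k\in\mathbb Z}\varphi(\lambda|f(t_k/w)|)\leq \frac{w\|\chi_w\|_1}{\delta}\sum_{k\in\mathbb Z}\frac{\Delta_k}{w}\,\varphi(\lambda|f(t_k/w)|).$$
The inner sum on the right is a Riemann sum for $\int_{\mathbb R}\varphi(\lambda|f|)\,d\mu=I^\varphi(\lambda f)$, with mesh controlled by $\Delta/w\to 0$; the hypothesis $f\in E^\varphi(\mathbb R)\cap BV^\varphi(\mathbb R)$ guarantees both that this integral is finite and that the Riemann sums actually converge to it, so that $\limsup_{w\to+\infty}\sum_k\tfrac{\Delta_k}{w}\varphi(\lambda|f(t_k/w)|)=I^\varphi(\lambda f)$. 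Combined with the extra hypothesis $\limsup_{w\to+\infty}w\|\chi_w\|_1<+\infty$, this yields exactly (L4) with $\beta=1$ and $c=\tfrac{1}{\delta}\limsup_w w\|\chi_w\|_1$. With (L1)--(L4) in hand, Theorem~\ref{tfin} provides the existence of $\lambda>0$ with $I^\varphi(\lambda(T_w^{(1)}f-f))\to 0$, which is the assertion.

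The step I expect to be the genuine obstacle is the convergence of the Riemann sums to $I^\varphi(\lambda f)$. This is where the two structural hypotheses on $f$ are indispensable: the $BV^\varphi$ condition makes $\varphi(\lambda|f|)$ a function of bounded variation, so that its irregular Riemann sums over the nonuniform nodes $t_k/w$ (with controlled spacing $\delta<\Delta_k<\Delta$) converge to its integral, while the $E^\varphi$ condition ensures that the limiting modular $I^\varphi(\lambda f)$ is finite for every $\lambda>0$. I would cite the quadrature result for $BV^\varphi$ functions (as in \cite{HaSh,BMV}) rather than reprove it, after which the verification of (L4), and hence the whole theorem, becomes routine once the spacing bounds are invoked.
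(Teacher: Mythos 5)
Your proposal is correct and follows essentially the same route as the paper: you verify (L1)--(L3) exactly as the paper does (point evaluations give $\Upsilon=1$, and $L_{t_k/w}f=f(t_k/w)=0$ off the support so Corollary~\ref{c1} applies), and you establish (L4) on $\mathcal Y=E^\varphi(\mathbb R)\cap BV^\varphi(\mathbb R)$ via the spacing bound $\delta<\Delta_k<\Delta$ and the Riemann-sum quadrature result for $BV^\varphi$ functions from \cite{HaSh,BMV}, which is precisely the estimate \eqref{ex1} with $\beta=1$. Invoking Theorem~\ref{tfin} to conclude is also exactly the paper's final step.
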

Note that a sufficient condition for $f$ to belong to $E^\varphi(\mathbb R)\cap BV^\varphi(\mathbb R)$ is that $f\in R(\mathbb R)\cap BV^\varphi(\mathbb R)$, where $R(\mathbb R)$ is the space of absolutely Riemann-integrable functions on $\mathbb R$.

Now we consider the Kantorovich sampling series $$T_w^{(2)}f(x)=\sum_{k\in\mathbb Z}\chi_w(x-t_k/w)\dfrac{w}{t_{k+1}-t_k}\int_{t_k/w}^{t_{k+1}/w}f(z)dz,$$ where $(t_k)_{k\in\mathbb Z}$ is a sequence of real numbers which satisfies the properties listed above. In this case $h_w(k)=t_k/w$ and $$L_{t_k/w}f=\dfrac{w}{\Delta_k}\int_{t_k/w}^{t_{k+1}/w}f(z)dz,\s \Delta_k=t_{k+1}-t_k.$$ Assumption (L1) is easily satisfied since $$||L_{t_k/w}||=\sup_{||f||_\infty\leq 1}\left|\dfrac{w}{\Delta_k}\int_{t_k/w}^{t_{k+1}/w}f(z)dz\right|=1.$$ Now, for (L2), if $f\in C(\mathbb R)$, then in particular $f$ is continuous at an  arbitrary point $x\in\mathbb R$; now, for every $\ep>0$, let $\gamma>0$ be such that $|f(x)-f(z)|\leq \ep$ whenever $|x-z|\leq \gamma.$ It follows that $$|L_{t_k/w}f-f(x)|=\left|\dfrac{w}{\Delta_k}\int_{t_k/w}^{t_{k+1}/w}f(z)dz-f(x)\right|\leq\dfrac{w}{\Delta_k}\int_{t_k/w}^{t_{k+1}/w}|f(z)-f(x)|dz.$$
Let $\overline w=\dfrac{2\Delta}{\gamma}$ and  $B_\ep=B(0,\gamma/2).$ The interval $(t_k/w,t_{k+1}/w)$ has length less that $\gamma/2$ as soon as $w>\overline w$, hence if $w>\overline w$ and $z\in (t_k/w,t_{k+1}/w),$ then $|x-z|\leq \gamma,$ hence $|f(x)-f(z)|\leq \ep.$ This means that, for every $\ep>0$, $$|L_{t_k/w}f-f(x)|\leq \dfrac{w}{\Delta_k}\int_{t_k/w}^{t_{k+1}/w}|f(z)-f(x)|dz\leq \ep,$$ whenever $|x-t_k/w|\leq \gamma/2$ and $w>\overline w=\dfrac{2\Delta}{\gamma},$ i.e., assumption (L2) is satisfied.
To check (L3), let $Supp(f)=\tilde K=[\tilde \gamma,\tilde\gamma]$. Let $\gamma=\tilde \gamma+\Delta$ and $K=[-\gamma,\gamma].$ If $t_k/w\notin K$, then, for every $w>0$,  $$\int_{t_k/w}^{t_{k+1}/w}f(z)dz=0.$$ This implies that also in this case  the stronger Corollary \ref{c1} holds. Now, it remains to understand what (L4) means. Using the convexity of $\varphi$, we can write \begin{equation*}\begin{split}&||\chi_w||_1\sum_{k\in\mathbb Z}\varphi\left(\lambda\left|\dfrac{w}{\Delta_k}\int_{t_k/w}^{t_{k+1}/w}f(z)dz\right|\right)\leq \dfrac{w||\chi_w||_1}{\delta}\sum_{k\in\mathbb Z}\int_{t_k/w}^{t_{k+1}/w}\varphi(\lambda|f(z)|)dz=\\ &= \dfrac{w||\chi_w||_1}{\delta}\int_\mathbb R\varphi(\lambda|f(z)|)dz=\dfrac{w||\chi_w||_1}{\delta}I^\varphi(\lambda f).\end{split}\end{equation*}

The above relation implies that (L4) is satisfied by every function $f\in L^\varphi(\mathbb R)$ as soon as $$\limsup_{w\rightarrow+\infty}w||\chi_w||_1<\infty,$$ as in the previous example. We can now write the following
\begin{thm}\label{ex2} Let the family $(\chi_w)_{w>0}$ satisfy $(\chi_1)-(\chi_5)$ together with the assumption that $$\limsup_{w\rightarrow+\infty}w||\chi_w||_1<\infty.$$ Let $f\in L^\varphi(\mathbb R)$. Then there exists a number $\lambda>0$ such that  
$$\lim_{w\rightarrow+\infty}I^\varphi(\lambda(T_w^{(2)}f-f))=0.$$\end{thm}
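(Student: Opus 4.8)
The plan is to deduce the statement directly from Theorem~\ref{tfin} by verifying that the family $(L_{t_k/w})_{k\in\mathbb Z}$ associated with $T_w^{(2)}$ satisfies (L1)--(L4), with the subspace $\mathcal Y$ of assumption (L4) being all of $L^\varphi(\mathbb R)$. The assumptions (L1)--(L3) are the routine part. For (L1), since $\frac{w}{\Delta_k}\,dz$ is a probability measure on $(t_k/w,t_{k+1}/w)$ one has $\|\tilde L_{t_k/w}\|=1$ uniformly in $k$ and $w$. For (L2), I would use the uniform continuity of $f\in C(\mathbb R)$: given $\varepsilon>0$ pick $\gamma$ with $|f(x)-f(z)|\le\varepsilon$ for $|x-z|\le\gamma$; once $w>2\Delta/\gamma$ the interval $(t_k/w,t_{k+1}/w)$ has length $<\gamma/2$, so for $|x-t_k/w|\le\gamma/2$ the averaged value differs from $f(x)$ by at most $\varepsilon$. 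For (L3), if $f\in C_c(\mathbb R)$ with $\supp f=\tilde K$, enlarging $\tilde K$ by $\Delta$ to a compact $K$ forces $\int_{t_k/w}^{t_{k+1}/w}f=0$ whenever $t_k/w\notin K$; hence $L_{t_k/w}f\equiv0$ off $K_w$, and in fact the stronger Corollary~\ref{c1} holds for continuous functions with compact support.

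The crux is (L4), and here I would exploit the convexity of $\varphi$ through Jensen's inequality applied to the same probability measure used above:
$$\varphi\!\left(\lambda\left|\frac{w}{\Delta_k}\int_{t_k/w}^{t_{k+1}/w}f(z)\,dz\right|\right)\le \frac{w}{\Delta_k}\int_{t_k/w}^{t_{k+1}/w}\varphi(\lambda|f(z)|)\,dz.$$
Summing over $k$, bounding $\frac{w}{\Delta_k}\le\frac{w}{\delta}$ via the lower mesh bound $\delta$, and recombining the integrals over consecutive intervals into a single integral over $\mathbb R$, I obtain
$$\|\chi_w\|_1\sum_{k\in\mathbb Z}\varphi\!\left(\lambda|L_{t_k/w}f|\right)\le \frac{w\|\chi_w\|_1}{\delta}\,I^\varphi(\lambda f).$$
Taking $\limsup_{w\to+\infty}$ and using the standing hypothesis $\limsup_{w\to+\infty}w\|\chi_w\|_1=:L<\infty$ yields exactly (L4) with $\beta=1$ and $c=L/\delta$, and crucially this estimate is valid for \emph{every} $f\in L^\varphi(\mathbb R)$. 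Consequently $\mathcal Y=L^\varphi(\mathbb R)$ in this example.

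With (L1)--(L4) established and $\mathcal Y=L^\varphi(\mathbb R)$, Theorem~\ref{tfin} applies to an arbitrary $f\in L^\varphi(\mathbb R)$ and produces a $\lambda>0$ with $I^\varphi(\lambda(T_w^{(2)}f-f))\to0$, which is the claim. The main obstacle is the verification of (L4): one must control the modular of the averaged sampled values uniformly in $w$ by the single modular $I^\varphi(\lambda f)$, and this rests simultaneously on convexity (to push $\varphi$ inside the average via Jensen), on the mesh separation $\delta$, and on the growth restriction $\limsup_{w}w\|\chi_w\|_1<\infty$. It is precisely the averaging built into the Kantorovich construction that makes (L4) hold for all of $L^\varphi(\mathbb R)$ at once, in contrast to the bare generalized sampling series $T_w^{(1)}$, where a Riemann-sum argument forces the smaller class $E^\varphi(\mathbb R)\cap BV^\varphi(\mathbb R)$.
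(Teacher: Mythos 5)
Your proof is correct and matches the paper's argument essentially step for step: the paper verifies (L1)--(L3) exactly as you do (with the same constants $\overline w=2\Delta/\gamma$ and the support enlargement by $\Delta$ yielding Corollary \ref{c1}), and establishes (L4) for all of $L^\varphi(\mathbb R)$ via the same Jensen-plus-mesh-bound estimate $\|\chi_w\|_1\sum_k\varphi(\lambda|L_{t_k/w}f|)\leq \frac{w\|\chi_w\|_1}{\delta}I^\varphi(\lambda f)$, then invokes Theorem \ref{tfin}. Your closing remark contrasting this with the Riemann-sum argument for $T_w^{(1)}$, which restricts to $E^\varphi(\mathbb R)\cap BV^\varphi(\mathbb R)$, is also precisely the distinction the paper draws.
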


We now move our attention to cases in which \eqref{O} is actually an integral operator. So, let $H=G=\mathbb R$. Apart from the convolution operator $T_w^{(4)}f$ obtained in Section 3, we focus our attention on a Kantorovich version of the convolution operator, namely $T_w^{(5)}f$ (see again Section 3). So, let $h_w:\mathbb R\rightarrow\mathbb R:t\mapsto t,$ and set $$L_{h_w(t)}f:=\dfrac{w}{2}\int_{t-1/w}^{t+1/w}f(u)du.$$ We have $$T_w^{(5)}f(x)=\int_\mathbb R\chi_w(x-t)\dfrac{w}{2}\int_{t-1/w}^{t+1/w}f(u)du.$$ It is fairly clear that (L1) is satisfied, and in fact $||L_{h_w(t)}||\equiv 1.$

Assumption (L2) is valid as well: indeed, arguing as above, let  $f$ be continuous at a point $x\in\mathbb R$, choose $\ep>0$, and let $\gamma>0$ be such that $|f(x)-f(u)|<\ep,$ whenever $|x-u|<\ep.$ Set $B_\ep=B(0,\gamma/2)$ and $\overline w=4/\gamma.$ Then if $|x-t|<\gamma/2$, $w>\overline w,$ and $u\in(t-1/w,t+1/w)$, we have $|x-u|<|x-t|+|t-u|<\dfrac{\gamma}{2}+\dfrac{2}{w}<\gamma,$ hence $|f(x)-f(u)|<\ep.$ It follows that $$|L_{h_w(t)}f-f(x)|\leq \dfrac{w}{2}\int_{t-1/w}^{t+1/w}|f(u)-f(x)|du\leq \ep.$$ This proves the validity of (L2). To check (L3), let $f\in C_c(\mathbb R)$ and assume that $Supp(f)=[-\tilde \gamma,\tilde \gamma]$ Choose $\gamma=\tilde \gamma+2$, and set $ K=[-\gamma,\gamma]$. Then, for sufficiently large $w>0$  and $t\notin K$, one has $$\int_{t-1/w}^{t+1/w}f(u)du=0.$$ This implies that (L3) is valid, and in particular that for every $\ep\ma 0$, if $Supp(f)=[\tilde\gamma,\tilde\gamma]$, $\gamma=\tilde\gamma+2$ and $K=[-\gamma,\gamma]$, then $$\int_{t\notin K}\varphi\left(\lambda\left|\dfrac{w}{2}\int_{t-1/w}^{t+1/w}f(u)du\right|\right)dt=0$$ for every $\lambda>0$ and $w>0$. In this case, Corollary \eqref{ccc} is valid, hence $T_w^{(4)}f$ converges to $f$ in the Luxemburg norm in $L^\varphi(\mathbb R)$. It remains to see what (L4) means. For, we have \begin{equation*}\begin{split}&||\chi_w||_1\int_\mathbb R\varphi\left(\lambda\left|\dfrac{w}{2}\int_{t-1/w}^{t+1/w}f(u)du\right|\right)dt\leq\\&\leq \dfrac{w||\chi_w||_1}{2}\int_0^{2/w}\left[\int_\mathbb R\varphi(\lambda|f(s+t-1/w)|ds\right]dt=\\&=\dfrac{w||\chi_w||_1}{2}\int_0^{2/w}I^\varphi(\lambda f)dt=||\chi_w||_1 I^\varphi(\lambda f).
\end{split}\end{equation*} by using the Jensen's inequality, the Fubini Theorem and some change of variables. This means that (L4) is satisfied for every $f\in L^\varphi(\mathbb R)$, and we note that, in this case, no further assumptions are required on the kernels, since  $||\chi_w||_1\leq\Gamma<+\infty$ for every $w>0$ by assumption. So we can write
\begin{thm}\label{ex3} Let the family $(\chi_w)_{w>0}$ satisfy $(\chi_1)-(\chi_5)$. Then, if $f\in L^\varphi(\mathbb R)$, there exists a number $\lambda>0$ such that $$\lim_{w\rightarrow+\infty}I^\varphi(\lambda(T_w^{(5)}f-f))=0.$$\end{thm}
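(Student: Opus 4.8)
The plan is to deduce the statement directly from the general modular convergence result, Theorem~\ref{tfin}, by verifying that the averaging family $L_{h_w(t)}f=\frac{w}{2}\int_{t-1/w}^{t+1/w}f(u)\,du$ satisfies all four structural assumptions (L1)--(L4) on the whole Orlicz space, that is, with $\mathcal Y=L^\varphi(\mathbb R)$. Since the hypotheses $(\chi_1)$--$(\chi_5)$ on the kernels are assumed outright, the only work concerns the family $(L_{h_w(t)})_{t\in H}$.

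First I would record (L1): the map $f\mapsto\frac{w}{2}\int_{t-1/w}^{t+1/w}f$ is an average over an interval of length $2/w$, so $\|L_{h_w(t)}\|=1$ uniformly, giving $\Upsilon=1$. For (L2) I would invoke the continuity of $f\in C(\mathbb R)$: choosing the ball $B_\ep=B(0,\gamma/2)$ and $\overline w=4/\gamma$, any $u$ in the averaging window lies within $\gamma$ of $x$ once $|x-t|<\gamma/2$ and $w>\overline w$, so the average stays within $\ep$ of $f(x)$. For (L3) I would point to the sharper fact that the operator actually \emph{annihilates} compactly supported functions away from their support: if $\supp f=[-\tilde\gamma,\tilde\gamma]$ and $\gamma=\tilde\gamma+2$, $K=[-\gamma,\gamma]$, then for $t\notin K$ the window $(t-1/w,t+1/w)$ misses the support, so $L_{h_w(t)}f=0$. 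Thus (L3) holds trivially (for any $\alpha$), and moreover Corollary~\ref{c1} applies, upgrading the convergence for $f\in C_c^\infty(\mathbb R)$ to the Luxemburg norm.

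The substantive step is (L4), and here is where I would concentrate the effort. The goal is a modular continuity estimate bounding $\|\chi_w\|_1\int_\mathbb R\varphi(\lambda|L_{h_w(t)}f|)\,dt$ by a multiple of $I^\varphi(\lambda f)$. The plan is to apply Jensen's inequality to the convex $\varphi$ against the normalized average, pushing $\varphi$ inside the inner integral, then use Fubini--Tonelli together with the translation invariance of Lebesgue measure (writing $u=s+t-1/w$) to factor the computation into $I^\varphi(\lambda f)$ times the length of the outer window, which cancels the normalization $w/2$. This should yield
\begin{equation*}
\|\chi_w\|_1\int_\mathbb R\varphi\!\left(\lambda\Big|\tfrac{w}{2}\!\int_{t-1/w}^{t+1/w}\!\!f(u)\,du\Big|\right)dt\le \|\chi_w\|_1\, I^\varphi(\lambda f),
\end{equation*}
so (L4) holds with $\beta=1$ and $c=\Gamma$ on all of $\mathcal Y=L^\varphi(\mathbb R)$.

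I expect (L4) to be the main obstacle, though more by its conceptual content than by its difficulty: the delicate point is that, in contrast with the sampling series $T_w^{(1)}$ and $T_w^{(2)}$, no auxiliary decay hypothesis $\limsup_w w\|\chi_w\|_1<\infty$ is needed here, because the continuous averaging already supplies exactly the $1/w$ scaling that the discretized versions had to recover from the mesh via a Riemann-sum argument. Consequently (L4) is available for \emph{every} $f\in L^\varphi(\mathbb R)$ with no extra constraint on the kernels beyond $\|\chi_w\|_1\le\Gamma$. Once (L1)--(L4) are in place with $\mathcal Y=L^\varphi(\mathbb R)$, Theorem~\ref{tfin} furnishes $\lambda>0$ with $I^\varphi(\lambda(T_w^{(5)}f-f))\to0$, which is precisely the claim.
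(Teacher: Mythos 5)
Your proposal is correct and follows essentially the same route as the paper: the paper likewise verifies (L1) with $\Upsilon=1$, (L2) with $B_\ep=B(0,\gamma/2)$ and $\overline w=4/\gamma$, (L3) via annihilation outside $K=[-\tilde\gamma-2,\tilde\gamma+2]$ (noting Corollary~\ref{c1} applies), and (L4) by Jensen's inequality, Fubini and a change of variables giving $\|\chi_w\|_1\,I^\varphi(\lambda f)$, then invokes Theorem~\ref{tfin} with $\mathcal Y=L^\varphi(\mathbb R)$. Even your closing observation --- that no auxiliary condition $\limsup_w w\|\chi_w\|_1<\infty$ is needed here, unlike for $T_w^{(1)}$ and $T_w^{(2)}$ --- is made explicitly in the paper.
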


We now  discuss operators $T_w^{(3)}f$ which provide an example for which the assumption (L3) is valid, but Corollary \ref{c1} is not, i.e., the family $\{L_{h_w(k)}\}$ does not satisfy $L_{h_w(t)}f=0$ when $t\notin K_w$ (and $f$ has compact support).  For simplicity, assume $G=\mathbb R$, $H=\mathbb Z$, $t=k$, $w=n$, and $h_n(t)=k/n.$  
Let $\psi\in L^1(\mathbb R)$ be such that $$\int_\mathbb R\psi(u)du=1,$$ and let
 $$L_{k/n}f:=n\int_\mathbb R \psi(nu-k)f(u)du.$$ Choose a function $\chi\in L^1(\mathbb R)$ as above, and define $\chi_n(x)=\chi(nx).$  We now have the series $$(T^{(3)}_nf)(x)=\sum_{k\in\mathbb Z}\chi(nx-k)\cdot L_{k/n}f=\sum_{k\in\mathbb Z}\chi(nx-k)\cdot n\int_\mathbb R\psi(nu-k)f(u)du,\s x\in\mathbb R.$$ This is the Durrmeyer generalized sampling series. 

It is easy to show that, if $f$ has compact support $K$ and if  $[-\gamma,\gamma]\supset K$, then in general $$L_{k/n}f=n\int_{-\gamma}^\gamma\psi(nu-k)f(u)du\neq 0.$$ This implies that Corollary \ref{c1} does not hold in general, and only the weaker Theorem \ref{t2} concerning modular convergence can be established. \\ Now, we check the properties $(L_i)$, $i=1,\dots,4.$

$(L1)$ is easily satisfied, and in fact $$|L_{k/n}f|\leq  n||f||_\infty\int_\mathbb R|\psi(nu-k)|du=||f||_\infty\cdot ||\psi||_1,$$ hence $$||L_{k/n}||_\infty=||\psi||_1,$$ for every $k\in\mathbb Z$ and $n\in\mathbb N$. 

Now, the conservation of continuity $(L2)$. Let us set $$K_n(x)=n\psi(nx).$$ Then $$||K_n||_1=||\psi||_1,\s n\in\mathbb N.$$ 
We can rewrite $$L_{k/n}f=\int_\mathbb RK_n(u-k/n)f(u)du.$$ This shows that $L_{k/n}$ is the restriction of the classical convolution $$(L_nf)(v)=\int_\mathbb R K_n(u-v)f(u)du,$$ to the values $v=k/n$. Now, if $f$ is uniformly continuous and bounded, it is well-known that (see \cite{BN}) $$\lim_{n\rightarrow\infty}||L_nf-f||_\infty=0.$$ It follows that for every $\ep>0$, there exists a number $\overline n\in\mathbb N$ such that for every $n>\overline n$ and $v\in\mathbb R$, one has $$|(L_nf)(v)-f(v)|\leq \ep/2.$$ Setting $v=k/n$ ($n>\overline n$) one obtains $$|L_{k/n}f-f(k/n)|\leq \ep/2.\s k\in\mathbb Z.$$
Now, fix $n>\overline n$. There exists $\delta>0$ such that if $|x-k/n|<\delta$, then $|f(x)-f(k/n)|<\ep/2.$ Hence, if $|x-k/n|<\delta$, we have $$|L_{k/n}f-f(x)|\leq |(L_nf)(k/n)-f(k/n)|+|f(k/n)-f(x)|<\ep.$$ This shows that $(L2)$ is valid, if $f$ is uniformly continuous and bounded.

Now, we check $(L3)$. Let $f$ have as support the compact set $K\subset\mathbb R$. Let $\gamma>0$ be such that $[-\gamma,\gamma]\supset K$.  It suffices to show that  for every $\ep>0$   there exists a compact set $[-M_n,M_n]\supset K$ such that  $$\sum_{|k|>M_n}\varphi(|L_{k/n}f|)=\sum_{|k|>M_n}\varphi\left(n\left|\int_\mathbb R \psi(nu-k)f(u)du\right|\right)<\ep$$ for all sufficiently large $n\in\mathbb N$. The summands in the above series can be estimated by $$\varphi\left(n\left|\int_\mathbb R \psi(nu-k)f(u)du\right|\right)\leq \dfrac{1}{||\psi||_1}\int_\mathbb R|\psi(t)|\varphi\left(||\psi||_1f\left(\dfrac{k+t}{n}\right)\right)dt.$$ Set $$I_n=\sum_{k\in\mathbb Z}\varphi\left(||\psi||_1f\left(\dfrac{k+t}{n}\right)\right).$$ Now, for fixed $n\in\mathbb N$, the series $I_n$ has only a finite number of non-zero summands, namely those which are given by the $k$'s such that $$-n\gamma-t<k<n\gamma-t,$$ hence they lie in an interval at most of length $2n\gamma.$ It follows that $$I_n=\sum_{k\in\mathbb Z}\varphi\left(||\psi||_1f\left(\dfrac{k+t}{n}\right)\right)\leq 2n\gamma\varphi(||\psi||_1\cdot||f||_\infty).$$ This shows that $I_n$ converges totally, and  $$\dfrac{1}{||\psi||_1}\int_\mathbb R|\psi(t)|\left[\sum_{k\in\mathbb Z}\varphi\left(||\psi||_1f\left(\dfrac{k+t}{n}\right)\right)\right]dt\leq 2n\gamma\varphi(||\psi_1||\cdot||f||_\infty).$$ This implies that for every $\ep>0$ and $n\in\mathbb N$ there exists a number $M_n\in\mathbb N$ such that $$\sum_{|k|>M_n}\varphi\left(|L_{k/n}f|\right)<\ep.$$

It remains to prove $(L4)$. As before, set $K_n(x)=n\psi(nx).$  Then $$L_{k/n}f=\int_\mathbb R K_n(u-k/n)f(u)du.$$ We have $$||\chi_n||_1\cdot\sum_{k\in\mathbb Z}\varphi(\lambda|L_{k/n}f|)\leq n||\chi_n||_1\cdot\sum_{k\in\mathbb Z}\dfrac{1}{n}\varphi\left(\lambda\left|\int_\mathbb R K_n(u-k/n)f(u)du\right|\right).$$ The quantity $n||\chi_n||_1$ equals $||\chi||_1$, and the right-hand side of the above inequality is a Riemann sum of the function $\varphi\left(\lambda\left|L_nf(v)\right|\right)$ for $v=k/n$,  where, as before, $$(L_nf)(v)=\int_\mathbb R K_n(u-v)f(u)du.$$ If $f\in E^\varphi(\mathbb R)\cap BV^\varphi(\mathbb R)$ ($BV^\varphi(\mathbb R)$ is the set of those functions such that $\varphi(\lambda|f|)\in BV(\mathbb R)$ for every $\lambda>0$), then, since $L_nf$ is a convolution, $L_nf\in E^\varphi(\mathbb R)\cap BV^\varphi(\mathbb R)$ as well, hence $$\limsup_{n\rightarrow\infty}\sum_{k\in\mathbb Z}\dfrac{1}{n}\varphi\left(\lambda\left|\int_\mathbb R K_n(u-k/n)f(u)du\right|\right)=I^\varphi(\lambda L_nf).$$ It remains to show that there exist numbers $\beta,c>0$ such that $$I^\varphi(\lambda L_nf)\leq c I^\varphi(\lambda\beta f).$$ 
For, $$I^\varphi(\lambda L_nf)=\int_\mathbb R\varphi\left(\left|\lambda\int_\mathbb RK_n(u-v)f(u)du\right|\right)dv\leq$$ $$\leq \dfrac{1}{||K_n||_1}\int_\mathbb R\int_\mathbb R\varphi\left(\lambda||K_n||_1\left|f(u)\right|\right)du\cdot |K_n(u-v)|dv\leq$$ $$\leq \int_\mathbb R\varphi\left(\lambda||K_n||_1\left|f(u)\right|\right)du=\int_\mathbb R\varphi\left(\lambda||\psi||_1\left|f(u)\right|\right)du= I^\varphi(\lambda||\psi||_1f).$$

We have proved $(L4)$ (with $c=1$ and $\beta=||\psi||_1$). 

\section{Some Graphical Examples}
This section provides some graphical representations of the convergence of the operators we have studied in the previous sections. In all the examples below the convergence must be interpreted as to be in the $L^p$ setting.

Although the prototypical example of kernel is obtained from the Fejer's kernel function

\vspace{.5cm}
$$F(x)=\dfrac{1}{2}\sinc^2\left(\dfrac{x}{2}\right),$$ 
\vspace{.5cm}
where
\vspace{.5cm}
 $$\sinc(x)=\begin{cases}\dfrac{\sin\pi x}{\pi x},&x\in\mathbb R\setminus\{0\},\\1,&x=0\end{cases},$$
\vspace{.5cm}

 it will be convenient for computational purposes to take a kernel with compact support over $\mathbb R$. Well-known examples of such kernels are those arising from linear combinations the so-called $B$-splines functions of order $n\in\mathbb N$, namely

$$M_n(x)=\dfrac{1}{(n-1)!}\sum_{j=0}^n(-1)^j\begin{pmatrix}n\\j\end{pmatrix}\left(\dfrac{n}{2}+x-1\right)_+^{n-1},$$ 
where the symbol $(\cdot)_+$ denotes the positive part. Below we represent the graphs of the functions $M_3(x),M_4(x)$  and $M(x)=4M_3(x)-3M_4(x)$ (Figure 1). It is easy to see that $M(x)$ satisfies all the assumptions required in Section 3.


\begin{figure}[htbp]
\begin{center}\includegraphics[width=.45\textwidth,height=.34\textwidth]
{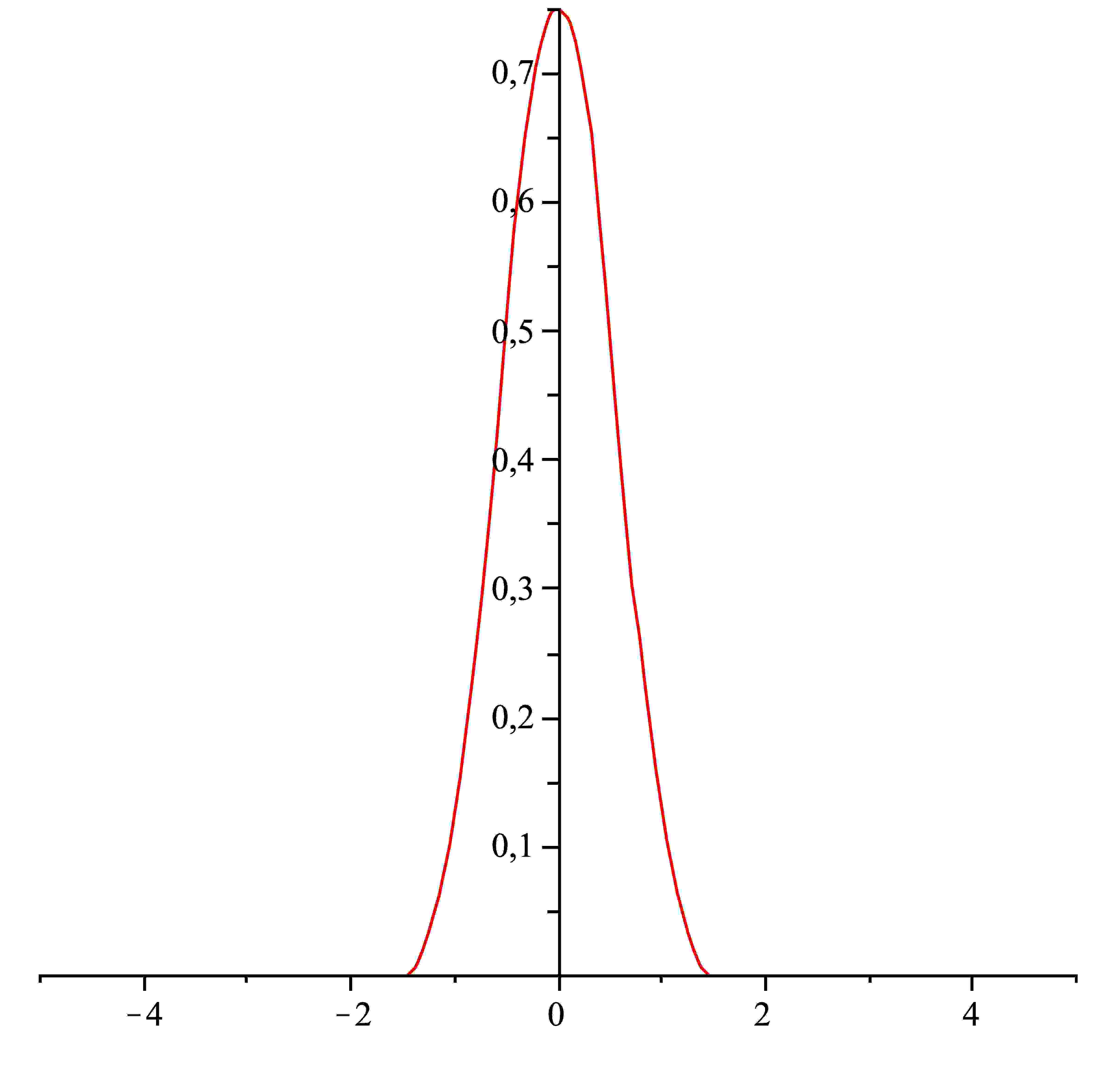}\includegraphics[width=.45\textwidth,height=.34\textwidth]
{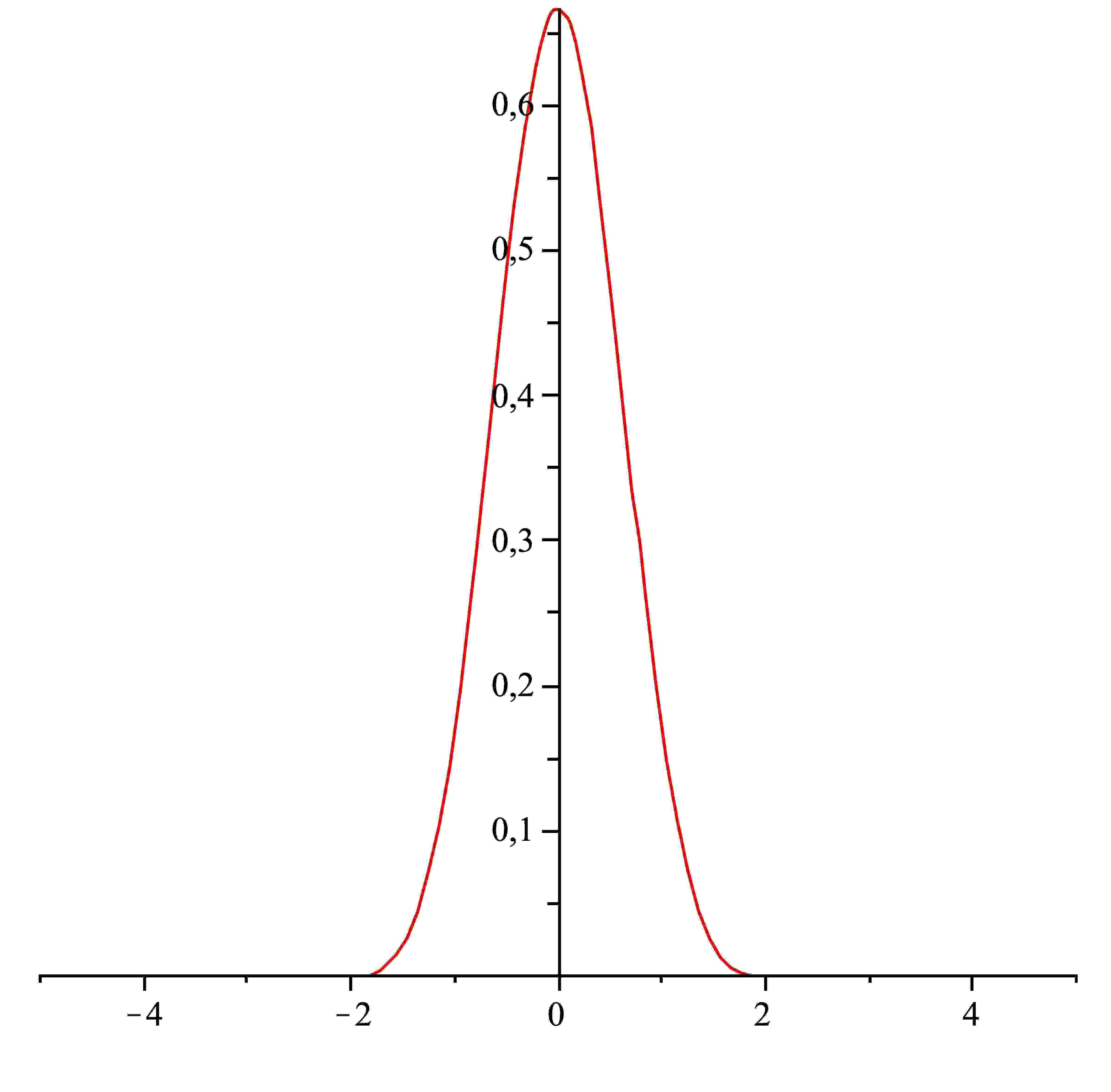}\end{center}\end{figure}
\begin{figure}
\begin{center}\includegraphics[width=.7\textwidth,height=.47\textwidth]
{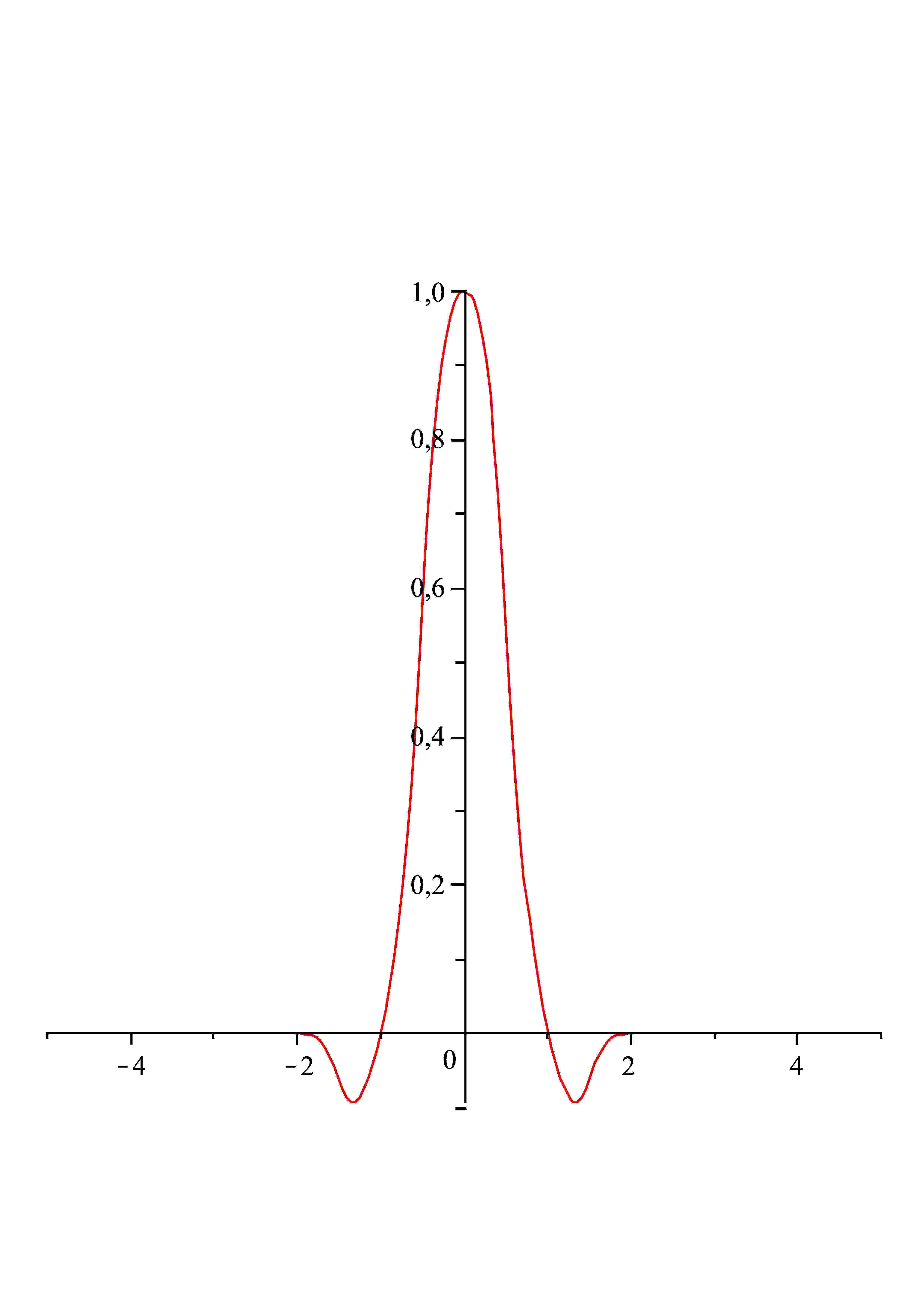}\end{center}
\caption{\small{\it The graphs of $M_3(x)$, $M_4(x)$ and $M(x)$ for $-5\leq x\leq5$.}}
\end{figure}

\vspace{4cm}
 We first consider the \ap Kantorovich-type generalized sampling series''

$$(T^{(2)}_wf)(x)=\sum_{k\in\mathbb Z}M(wx-k)w\int_{k/w}^{(k+1)/w}f(u)du,$$ where

$$f(u)=\begin{cases}40/u^2,&u\mi-5\\-1,&-5\leq u\mi-3\\2,&-3\leq u\mi-2\\-1/2,&-2\leq u\mi-1\\3/2,&-1\leq u\mi0\\1,
&0\leq u\mi1\\-1/2,&1\leq u\mi2  \\-2/u^5,&u\geq2\end{cases}$$

The graphs below represent the approximation of $T^{(2)}_wf(x)$ for $w=10,20,40$.
\begin{figure}[htbp]
\includegraphics[width=.49\textwidth,height=.45\textwidth]
{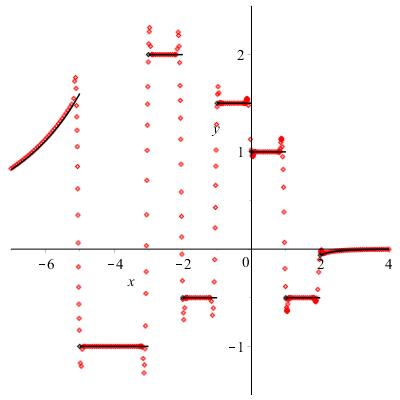}
\includegraphics[width=.49\textwidth,height=.45\textwidth]
{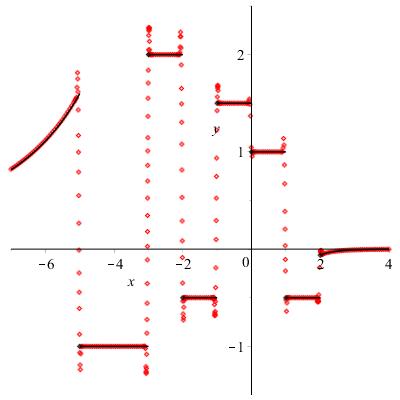}\end{figure}
\begin{figure}[htbp]
\includegraphics[width=.49\textwidth,height=.45\textwidth]
{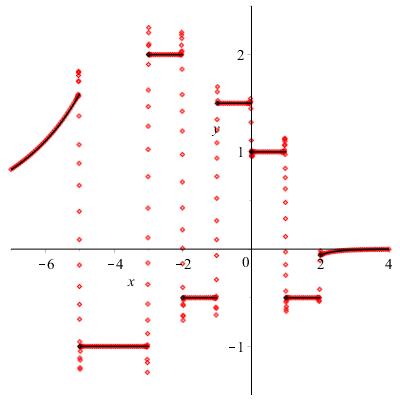}
\caption{\small{\it The graphs of the  functions $T_{5}^{(2)}f(x),T_{15}^{(2)}f(x)$, $T_{40}^{(2)}f(x)$ (red) compared to the graph of $f(x)$ (blue)}}
\end{figure}

\vspace{5cm}
In the next example we consider the Durrmeyer generalized sampling series $T^{(3)}_nf(x)$, in the form of  Section 5. For computational convenience, we take $\psi(u)=F(u)$,  and define $$T_n^{(3)}f(x)=\sum_{k\in\mathbb Z}M(nx-k)\cdot n\int_\mathbb RF(nu-k)f(u)du,$$ where $$f(x)=\begin{cases}\dfrac{1}{u^2},&x<-1\\ -1,&-1\leq x<0 \\	2,&0\leq x<2 \\-\dfrac{3}{u^3},&x\geq 2.\end{cases}$$

The graphs below show the approximation of $T_n^{(3)}f(x)$ as $n=5,10,20$.

\begin{figure}[htbp]
\includegraphics[width=.49\textwidth,height=.45\textwidth]
{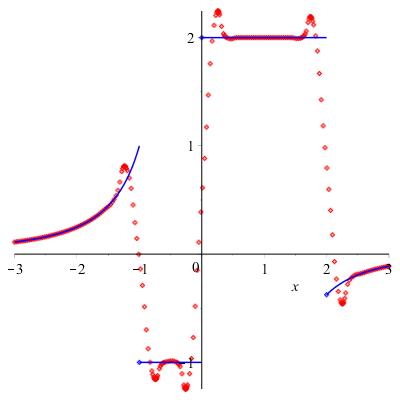}
\includegraphics[width=.49\textwidth,height=.45\textwidth]
{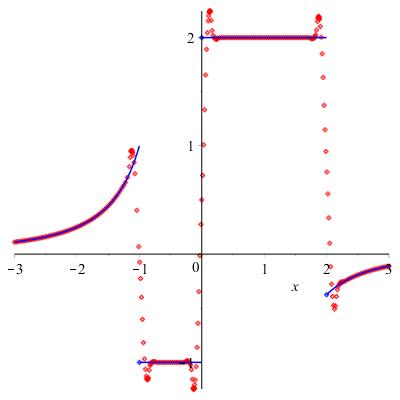}
\includegraphics[width=.49\textwidth,height=.45\textwidth]
{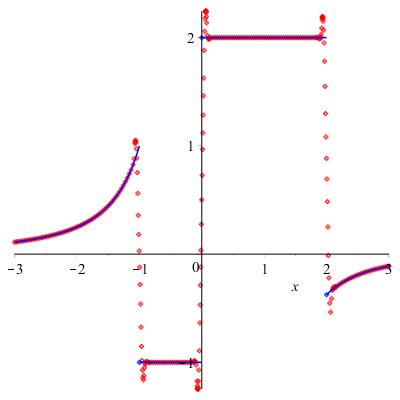}
\vspace{-.2cm}
\caption{\small{\it The graphs of the  functions $T_5^{(3)}f(x),T_{10}^{(3)}f(x)$, $T_{20}^{(3)}f(x)$ (red) compared to the graph of $f(x)$ (blue)}}
\end{figure}
\vspace{2cm}
The last example takes  into account the operator $T^{(7)}_wf$ introduced in Section 3.  In this case, however, we cannot take kernels based on the function $M(u)$ as before, because of the base space $\mathbb R^+$ and of the measure $d\mu(t)=\dfrac{dt}{t}$. Suitable kernel functions in this case are given by

$$\mathcal M_w(u)=\begin{cases} wu^w,&0<u<1,\\0,& \mbox{otherwise.}\end{cases}$$ 

Next, we consider the operators

$$T_w^{(7)}f(x)=\int_0^\infty\mathcal M_w\left(\dfrac{x}{t}\right)\dfrac{1}{2\ln(1+1/w)}\left(\int_{t\frac{w}{w+1}}^{t\frac{w+1}{w}}f(u)\dfrac{du}{u}\right)\dfrac{dt}{t},$$ 

where $$f(x)=\begin{cases}2x,&0\leq x<2,\\1,&2\leq x<4,\\-25/x^3,&x\geq 4\end{cases}.$$
Below (Figure 4), we  represent the approximation of the functions  $T_w^{(7)}f(x)$ for $w=5,20$ and 30 respectively.
\begin{figure}[htbp]
\includegraphics[width=.49\textwidth,height=.45\textwidth]
{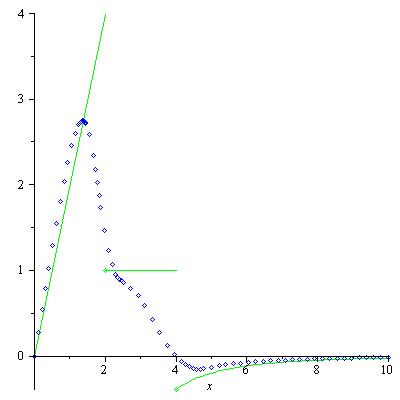}
\includegraphics[width=.49\textwidth,height=.45\textwidth]
{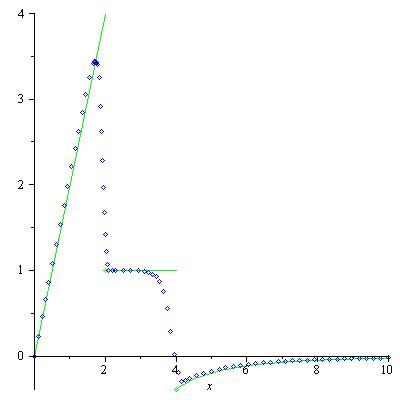}
\includegraphics[width=.49\textwidth,height=.45\textwidth]
{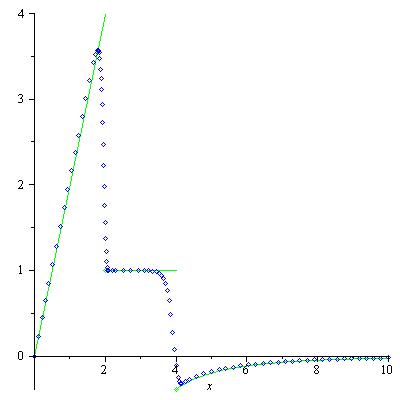}
\caption{\small{\it The graphs of the  functions $T_5^{(7)}f(x),T_{20}^{(7)}f(x)$, $T_{30}^{(7)}f(x)$ (red) compared to the graph of $f(x)$ (green)}}
\end{figure}

\vskip4cm
{\bf Some Concluding Remarks:}
\begin{itemize}
\item[1)] Note that, apart from  the case of operators $T_w^{(3)}f$ (Durrmeyer generalized sampling series), where we  use a uniform convergence result, for the other cases the same approximation  results still hold using the pointwise convergence. And indeed  Theorem \ref{t1}  can be established by similar reasonings  for continuous functions with respect to the pointwise convergence.

\item[2)] We point out that when  we deal with the discontinuous function $f$, the correct way to interpret the approximation results in the above graphical examples is, e.g., to consider approximation with respect to  the $L^p$-norm (i.e., $\varphi(u)=u^p$). On the other hand, when we use continuous functions, the above graphs can be analogously plotted to show  pointwise or uniform approximation of the considered operators to the function $f.$ 

\item[3)]  The theory here introduced  and developed, represents a unified approach in order to study the convergence of  several classes of operators, among them there are integral and discrete operators; in particular  we cover the cases of  the generalized sampling series (with its ''time-jitter'' versions), the sampling Kantorovich ones, the Durrmeyer generalized sampling series  and the cases of convolution  operators (classical, in the Mellin sense, and also of Kantorovich type).

\item[4)]  The previous theory,  set  in a general Orlicz space with a general  $\varphi$-function, represents also a unifying approach  to formulate the approximation  results in several particular cases of Orlicz spaces, interesting by themselves. Among them, for example, there are, as already mentioned in Section 2,  the  $L^p$-spaces ($p \geq 1$),  the  exponential spaces, the interpolation spaces'' or \ap Zygmund spaces''  and many others. The last are very important in the interpolation theory and in the  theory of PDEs.   
 \end{itemize}

\vskip1cm

\noindent{\bf Acknowledgements.}
The authors are members of the Gruppo  
Nazionale per l'Analisi Matematica, la Probabilit\'a e le loro  
Applicazioni (GNAMPA) of the Istituto Nazionale di Alta Matematica (INdAM). 
\noindent The authors are partially supported by the "Department of Mathematics and Computer Science" of the University of Perugia (Italy). 
\rm
\small

\end{document}